\newtheorem{theorem}{Theorem}[section]
\newtheorem{lemma}[theorem]{Lemma}
\newtheorem{corollary}[theorem]{Corollary}
\theoremstyle{definition}
\newtheorem{definition}[theorem]{Definition}
\newtheorem{example}[theorem]{Example}
\theoremstyle{remark}
\newtheorem{remark}[theorem]{Remark}
\numberwithin{equation}{section}
\begin{document}

\title[An extension of orthogonality relations]{An extension of orthogonality relations based\\ on norm derivatives}

\author[A. Zamani and M.S. Moslehian]{Ali Zamani \MakeLowercase{and} Mohammad Sal Moslehian}
\address [A. Zamani]{Department of Mathematics,
Farhangian University, Tehran, Iran} \email{zamani.ali85@yahoo.com}
\address [M. S. Moslehian]{Department of Pure Mathematics,
Ferdowsi University of Mashhad, P.O. Box 1159, Mashhad 91775, Iran}
\email{moslehian@um.ac.ir, moslehian@yahoo.com}
\subjclass[2010]{Primary 46B20; Secondary 47B49, 46C50.}
\keywords{Norm derivative; orthogonality; orthogonality preserving mappings; smoothness.}

\begin{abstract}
We introduce the relation ${\rho}_{\lambda}$-orthogonality in the setting of normed spaces as an extension
of some orthogonality relations based on norm derivatives, and present some of its essential properties.
Among other things, we give a characterization of inner product spaces via the functional ${\rho}_{\lambda}$.
Moreover, we consider a class of linear mappings preserving this new kind of orthogonality.
In particular, we show that a linear mapping preserving ${\rho}_{\lambda}$-orthogonality
has to be a similarity, that is, a scalar multiple of an isometry.
\end{abstract}
\maketitle

\section{Introduction}
In an inner product space $\big(H, \langle \cdot, \cdot\rangle\big)$,
an element $x\in H$ is said to be orthogonal to $y\in H$ (written as $x\perp y$)
if $\langle x, y\rangle = 0$.
In the general setting of normed spaces, numerous notions of orthogonality
have been introduced. Let $(X, \|\cdot\|)$ be a real normed linear space of dimension at
least 2. One of the most important ones is the concept of the Birkhoff--James orthogonality
($B$-orthogonality) that reads as follows:
If $x$ and $ y$ are elements of $X$, then $x$ is orthogonal to $y$ in the Birkhoff--James
sense \cite{B, J}, in short $x\perp_By$, if
\begin{align*}
\|x + \lambda y\| \geq \|x\| \qquad (\lambda\in\mathbb{R}).
\end{align*}
Also, for $x, y\in X$ the isosceles-orthogonality ($I$-orthogonality) relation in $X$ (see \cite{J}) is defined by
\begin{align*}
x \perp_{I} y \Leftrightarrow \|x + y\| = \|x - y\|.
\end{align*}
One of the possible notions of orthogonality is connected with the so-called
norm's derivatives, which are defined by
\begin{align*}
\rho_{-}(x,y):=\|x\|\lim_{t\rightarrow0^{-}}\frac{\|x+ty\|-\|x\|}{t}
\end{align*}
and
\begin{align*}
\rho_{+}(x,y):=\|x\|\lim_{t\rightarrow0^{+}}\frac{\|x+ty\|-\|x\|}{t}.
\end{align*}
Convexity of the norm yields that the above definitions are meaningful.
The following properties, which will be used in the present paper
can be found, for example, in \cite{A.S.T}.
\begin{itemize}
\item[(i)] For all $x, y \in X$, $\rho_{-}(x, y)\leq \rho_{+}(x, y)$
and $|\rho_{\pm}(x,y)| \leq \|x\|\|y\|.$
\item[(ii)] For all $x, y \in X$ and all $\alpha \in \mathbb{R}$, it holds that
\begin{align*}
\rho_{\pm}(\alpha x,y) = \rho_{\pm}(x,\alpha y)=\left\{\begin{array}{ll}
\alpha \rho_{\pm}(x,y), &\alpha \geq 0,\\
\alpha \rho_{\mp}(x,y), &\alpha< 0.\end{array}\right.
\end{align*}
\item[(iii)] For all $x, y \in X$ and all $\alpha \in \mathbb{R}$,
\begin{align*}
\rho_{\pm}(x,\alpha x + y) = \alpha {\|x\|}^2 + \rho_{\pm}(x,y).
\end{align*}
\end{itemize}
Recall that a support functional $F_x$ at a nonzero $x \in X$ is a norm
one functional such that $F_x(x) = \|x\|$.
By the Hahn--Banach theorem, there always exists
at least one such functional for every $x \in X$.
Recall also that $X$ is smooth at the point $x$ in $X$
if there exists a unique support functional at $x$,
and it is called smooth if it is smooth at every $x \in X$.
It is well known that $X$ is smooth at $x$ if and only if
$\rho_{+}(x,y) = \rho_{-}(x,y)$ for all $y\in X$; see \cite{A.S.T}.

It turns out that the smoothness is closely related to the Gateaux differentiability.
Recall that the norm $\|\cdot\|$ is said to be Gateaux differentiable at $x \in X$ if the
limit
\begin{align*}
f_x(y) = \lim_{t\rightarrow0}\frac{\|x+ty\|-\|x\|}{t}
\end{align*}
exists for all $y\in X$.
We call such $f_x$ as the Gateaux differential at $x$ of $\|\cdot\|$.
It is not difficult to verify that $f_x$ is a bounded linear functional on $X$.
When $x$ is a smooth point, it is easy to see that
$\rho_{+}(x,y) = \rho_{-}(x,y) = \|x\|f_x(y)$ for all $y\in X$.
Therefore $X$ is smooth at $x$ if and only if the norm is the Gateaux differentiable at $x$.

The orthogonality relations related to $\rho_{\pm}$ are defined as follows; see \cite{A.S.T, Mil}:
\begin{align*}
x\perp_{\rho_{\pm}}y \Leftrightarrow \rho_{\pm}(x, y) = 0
\end{align*}
and
\begin{align*}
x\perp_{\rho}y \Leftrightarrow \rho(x,y):=\frac{\rho_-(x,y) + \rho_+(x,y)}{2} = 0.
\end{align*}
Also, the notion of $\rho_*$-orthogonality is introduced in \cite{C.L, M.Z.D} as
\begin{align*}
x\perp_{\rho_*}y \Leftrightarrow \rho_*(x,y) := \rho_-(x,y)\rho_+(x,y) = 0.
\end{align*}
Note that $\perp_{\rho_{\pm}}, \perp_{\rho}, \perp_{\rho_*} \subset \perp_B$.
Furthermore, it is obvious that for a real inner product space all the above
relations coincide with the standard orthogonality given by the inner product.
For more information about the norm derivatives and their
properties, interested readers are referred to \cite{A.S.T, C.W.2, C.W.3, Dra, W}.
More recently, further properties of the relation $\perp_{\rho_*}$
are presented in \cite{M.Z.D}.

Now, we introduce an orthogonality relation as an extension of orthogonality
relations based on norm derivatives ${\rho_{\pm}}$.
\begin{definition}
Let $(X, \|\cdot\|)$ be a normed space, and let $\lambda \in [0, 1]$.
The element $x\in X$ is a ${\rho}_{\lambda}$-orthogonal to $y\in X$,
denoted by $x\perp_{{\rho}_{\lambda}}y$, if
\begin{align*}
{\rho}_{\lambda}(x, y): = \lambda\rho_-(x,y) + (1 - \lambda)\rho_+(x,y) = 0.
\end{align*}
\end{definition}
The main aim of the present work is to investigate the
${\rho}_{\lambda}$-orthogonality in a normed space $X$.
In Section 2, we first give basic properties of the functional ${\rho}_{\lambda}$.
In particular, we give a characterization of inner product spaces based on ${\rho}_{\lambda}$.
Moreover, we give some characterizations of smooth
spaces in terms of ${\rho}_{\lambda}$-orthogonality.
In Section 3, we consider a class of linear mappings preserving this kind of
orthogonality. In particular, we show that a linear mapping preserving
${\rho}_{\lambda}$-orthogonality has to be a similarity, that is, a scalar
multiple of an isometry.

\section{${\rho}_{\lambda}$-orthogonality and characterization of inner product spaces}
We start this section with some properties of the functional ${\rho}_{\lambda}$.
The following lemma will be used.
\begin{lemma}\cite[Theorem 1]{Mal}\label{L22}
For any nonzero elements $x$ and $y$ in a normed space $(X, \|\cdot\|)$, it is true that
\begin{align*}
\|x + y\| \leq \|x\| + \|y\| -
\left(2 - \left\|\frac{x}{\|x\|} + \frac{y}{\|y\|}\right\|\right)\min\{\|x\|, \|y\|\}.
\end{align*}
\end{lemma}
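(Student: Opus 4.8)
The plan is to reduce the inequality to a single application of the triangle inequality after an appropriate rescaling. First I would observe that the asserted inequality is symmetric in $x$ and $y$: interchanging the two vectors leaves the right-hand side unchanged, since $\min\{\|x\|, \|y\|\}$, the mixed term $\left\|\frac{x}{\|x\|} + \frac{y}{\|y\|}\right\|$, and the sum $\|x\| + \|y\|$ are all symmetric under the swap, while the left-hand side is trivially symmetric. Hence there is no loss of generality in assuming $\|y\| \leq \|x\|$, so that $\min\{\|x\|, \|y\|\} = \|y\|$.

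Next I would introduce the normalized vectors $u = \frac{x}{\|x\|}$ and $v = \frac{y}{\|y\|}$ and set $a = \|x\|$ and $b = \|y\|$, noting that the hypothesis that $x, y$ are nonzero guarantees $a \geq b > 0$. Expanding the product $\left(2 - \|u + v\|\right)b$ on the right-hand side, the claim becomes the equivalent statement $\|a u + b v\| \leq (a - b) + b\|u + v\|$, with $x + y = a u + b v$.

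The key step will be the algebraic identity $a u + b v = b(u + v) + (a - b)u$, which splits $x + y$ into a multiple of $u + v$ together with a \emph{nonnegative} multiple of the unit vector $u$; it is precisely this decomposition that makes the smaller of the two norms appear as the coefficient of $\|u + v\|$. Applying the triangle inequality to the right-hand side of the identity yields $\|a u + b v\| \leq b\|u + v\| + (a - b)\|u\|$, and since $\|u\| = 1$ and $a - b \geq 0$ this last quantity equals exactly $(a - b) + b\|u + v\|$, which is the desired bound.

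I do not anticipate a genuine obstacle here; the entire content lies in recognizing the splitting $a u + b v = b(u + v) + (a - b)u$, after which the conclusion is immediate. The only point requiring a moment of care is the reduction to the case $\|y\| \leq \|x\|$: one must verify that the right-hand side is truly symmetric before discarding the other case, but as noted above this is clear from the symmetry of $\min$, of the norm, and of the sum.
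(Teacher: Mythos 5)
Your proof is correct. The paper itself offers no proof of this lemma, citing it as \cite[Theorem 1]{Mal}; your decomposition $au + bv = b(u+v) + (a-b)u$ (equivalently, $x+y = \frac{\|x\|-\|y\|}{\|x\|}\,x + \|y\|\bigl(\frac{x}{\|x\|}+\frac{y}{\|y\|}\bigr)$ when $\|y\|\le\|x\|$) is exactly the one-line triangle-inequality argument used in Maligranda's original paper, so there is nothing to add.
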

\begin{theorem}\label{T23}
Let $(X, \|\cdot\|)$ be a normed space, and let $\lambda \in [0, 1]$. Then
\begin{itemize}
\item[(i)] ${\rho}_{\lambda}(tx, y) = {\rho}_{\lambda}(x, ty)
= t{\rho}_{\lambda}(x, y)$ for all $x, y\in X$ and all $t\geq0$.
\item[(ii)] ${\rho}_{\lambda}(tx, y) = {\rho}_{\lambda}(x, ty)
= t{\rho}_{1 - \lambda}(x, y)$ for all $x, y\in X$ and all $t<0$.
\item[(iii)] ${\rho}_{\lambda}(x, tx + y) = t{\|x\|}^2
+ {\rho}_{\lambda}(x, y)$ for all $x, y\in X$ and all $t\in\mathbb{R}$.
\item[(iv)] If $x$ and $y$ are nonzero elements of $X$ such that
$x\perp_{{\rho}_{\lambda}}y$, then $x$ and $y$ are linearly independent.
\item[(v)] $(\|x\| - \|x - y\|)\|x\|\leq {\rho}_{\lambda}(x, y)
\leq(\|x + y\| - \|x\|)\|x\|$ for all $x, y\in X$.
\item[(vi)] $\big|{\rho}_{\lambda}(x, y)\big| \leq \|x\|\|y\|$ for all $x, y\in X$.
\item[(vii)] If $x$ and $y$ are nonzero elements of $X$, then
\begin{align*}
\left(1 - \left\|\frac{x}{\|x\|} - \frac{y}{\|y\|}\right\|\right)\|x\|\|y\|
\leq {\rho}_{\lambda}(x, y) \leq\left(\left\|\frac{x}{\|x\|}
+ \frac{y}{\|y\|}\right\| - 1\right)\|x\|\|y\|.
\end{align*}
\end{itemize}
\end{theorem}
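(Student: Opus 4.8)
The plan is to obtain (i)--(vii) from the definition $\rho_\lambda(x,y)=\lambda\rho_-(x,y)+(1-\lambda)\rho_+(x,y)$ together with the three listed properties of $\rho_\pm$, organised around one structural remark that I would record at the outset: because $\lambda\in[0,1]$ and $\rho_-(x,y)\le\rho_+(x,y)$, the value $\rho_\lambda(x,y)$ is a convex combination of $\rho_-(x,y)$ and $\rho_+(x,y)$, so
\[
\rho_-(x,y)\le\rho_\lambda(x,y)\le\rho_+(x,y)\qquad(x,y\in X).
\]
This sandwich is the engine behind (v), (vi) and (vii): it makes the passage from the known bounds on $\rho_\pm$ to the corresponding bounds on $\rho_\lambda$ immediate.

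Parts (i)--(iii) I would settle by direct substitution. For (i), property (ii) of $\rho_\pm$ with $\alpha=t\ge0$ gives $\rho_\pm(tx,y)=t\rho_\pm(x,y)$, and likewise in the second slot, so the factor $t$ pulls out of the convex combination. For (ii), the case $\alpha=t<0$ sends $\rho_\pm(tx,y)$ to $t\rho_\mp(x,y)$; substituting and collecting yields $\rho_\lambda(tx,y)=t\big((1-\lambda)\rho_-(x,y)+\lambda\rho_+(x,y)\big)=t\rho_{1-\lambda}(x,y)$, which is exactly the stated identity. For (iii), property (iii) of $\rho_\pm$ contributes the same term $t\|x\|^2$ to both $\rho_-$ and $\rho_+$, and since $\lambda+(1-\lambda)=1$ this term passes through the convex combination undiluted.

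For (iv) I would argue by contradiction. If $x$ and $y$ were linearly dependent with $x\neq0$, then $y=\alpha x$ with $\alpha\neq0$ (as $y\neq0$); applying (iii) with $t=\alpha$ and with $y$ replaced by $0$ (so that $\rho_\lambda(x,0)=0$) gives $\rho_\lambda(x,\alpha x)=\alpha\|x\|^2\neq0$, contradicting $x\perp_{\rho_\lambda}y$. For (v) I would use convexity of $g(t)=\|x+ty\|$: the difference quotient $\big(g(t)-g(0)\big)/t$ is nondecreasing in $t$, so comparing the one-sided derivatives at $0$ with the slopes at $t=\pm1$ gives $\rho_+(x,y)\le(\|x+y\|-\|x\|)\|x\|$ and $\rho_-(x,y)\ge(\|x\|-\|x-y\|)\|x\|$; the sandwich then closes both inequalities for $\rho_\lambda$. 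Part (vi) is then immediate, either from the sandwich together with $|\rho_\pm(x,y)|\le\|x\|\|y\|$, or directly via $|\rho_\lambda(x,y)|\le\lambda|\rho_-(x,y)|+(1-\lambda)|\rho_+(x,y)|\le\|x\|\|y\|$.

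The substantive step is (vii), and here Lemma \ref{L22} enters. Writing $u=x/\|x\|$ and $v=y/\|y\|$, I would apply the lemma to the pair $x$ and $ty$ for small $t>0$: then $ty/\|ty\|=v$ and, for $t$ small enough, $\min\{\|x\|,t\|y\|\}=t\|y\|$, so the lemma reduces to $\|x+ty\|\le\|x\|+t\|y\|\big(\|u+v\|-1\big)$, whence $\big(\|x+ty\|-\|x\|\big)/t\le\|y\|\big(\|u+v\|-1\big)$; letting $t\to0^+$ and multiplying by $\|x\|$ bounds $\rho_+(x,y)$ above by $(\|u+v\|-1)\|x\|\|y\|$. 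Applying the lemma instead to $x$ and $-sy$ for small $s>0$, so that the relevant unit vector is $-v$ and the constant becomes $\|u-v\|-1$, and then dividing by $-s$, produces the matching lower bound $\rho_-(x,y)\ge(1-\|u-v\|)\|x\|\|y\|$. The sandwich $\rho_-\le\rho_\lambda\le\rho_+$ delivers (vii). The step I expect to be the main obstacle is precisely this one: one must check that the minimum in Lemma \ref{L22} collapses to the $\|y\|$-term for all sufficiently small $|t|$, and one must watch the sign reversal incurred by dividing the left-hand difference quotient by a negative quantity, so that the inequality ends up pointing the correct way.
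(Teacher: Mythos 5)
Your proposal is correct and follows essentially the same route as the paper: parts (i)--(vi) by direct substitution from the stated properties of $\rho_{\pm}$ (the paper leaves these as immediate), and part (vii) by applying Lemma \ref{L22} to $x$ and $ty$ for small $t>0$ so that the minimum collapses to $t\|y\|$, then handling $-y$ to bound $\rho_-$ from below, and finally using $\rho_-\le\rho_\lambda\le\rho_+$ to transfer both bounds to the convex combination. The sign-reversal and small-$t$ caveats you flag are exactly the points the paper's computation also handles, so there is no gap.
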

\begin{proof}
The statements (i)--(vi) follow directly from the definition of the functional ${\rho}_{\lambda}$.
To establish (vii) suppose that $x$ and $y$ are nonzero elements of $X$ and that
$0<t<\frac{\|x\|}{\|y\|}$. Applying Lemma \ref{L22} to $x$ and $ty$, we get
\begin{align*}
\left(2 - \left\|\frac{x}{\|x\|} + \frac{ty}{\|ty\|}\right\|\right)
\min\{\|x\|, \|ty\|\} \leq \|x\| + \|ty\| - \|x + ty\|,
\end{align*}
and hence
\begin{align*}
\left(2 - \left\|\frac{x}{\|x\|} + \frac{y}{\|y\|}\right\|\right)t\|y\|
\leq \|x\| + t\|y\| - \|x + ty\|.
\end{align*}
Thus
\begin{align*}
\frac{\|x + ty\| - \|x\|}{t}\leq \left(\left\|\frac{x}{\|x\|}
+ \frac{y}{\|y\|}\right\| - 1\right)\|y\|.
\end{align*}
It follows that
\begin{align}\label{I21}
\rho_{+}(x, y) \leq \left(\left\|\frac{x}{\|x\|}
+ \frac{y}{\|y\|}\right\| - 1\right)\|x\|\,\|y\|.
\end{align}
Putting $-y$ instead of $y$ in \eqref{I21}, we get
\begin{align}\label{I22}
\rho_{-}(x, y)\geq \left(1 - \left\|\frac{x}{\|x\|}
- \frac{y}{\|y\|}\right\|\right)\|x\|\,\|y\|.
\end{align}
Since $\rho_{-}(x, y) \leq \rho_{+}(x, y)$, from (\ref{I21}) and (\ref{I22}), we reach
\begin{align}\label{I23}
\left(1 - \left\|\frac{x}{\|x\|} - \frac{y}{\|y\|}\right\|\right)\|x\|\,\|y\|
\leq \rho_{+}(x, y) \leq \left(\left\|\frac{x}{\|x\|}
+ \frac{y}{\|y\|}\right\| - 1\right)\|x\|\,\|y\|
\end{align}
and
\begin{align}\label{I24}
\left(1 - \left\|\frac{x}{\|x\|} - \frac{y}{\|y\|}\right\|\right)\|x\|\,\|y\|
\leq \rho_{-}(x, y) \leq \left(\left\|\frac{x}{\|x\|}
+ \frac{y}{\|y\|}\right\| - 1\right)\|x\|\,\|y\|.
\end{align}
Now, from (\ref{I23}), (\ref{I24}), and the definition of ${\rho}_{\lambda}$, the proof is completed.
\end{proof}
\begin{remark}
Since $- 1 \leq 1 - \left\|\frac{x}{\|x\|} - \frac{y}{\|y\|}\right\|$
and $\left\|\frac{x}{\|x\|} + \frac{y}{\|y\|}\right\| - 1 \leq 1$,
the inequality (vii) of Theorem \ref{T23} is an improvement
of the known inequality $\big|{\rho}_{\pm}(x, y)\big| \leq \|x\|\|y\|$.
\end{remark}
We recall the following lemma which gives a characterization of
Birkhoff--James orthogonality.
\begin{lemma}\cite[Theorem 50]{Dra}\label{L21}
Let $X$ be a normed space, and let $x, y\in X$.
Then the following conditions are equivalent:
\begin{itemize}
\item[(i)] $x\perp_B y$.
\item[(ii)] $\rho_-(x,y)\leq 0 \leq \rho_+(x,y)$.
\end{itemize}
\end{lemma}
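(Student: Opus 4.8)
The plan is to exploit the convexity of the real function $\varphi(t) := \|x + ty\|$, together with the identification of $\rho_{\pm}(x, y)$ with its one-sided derivatives at the origin. First I would note that, since the norm is convex, $\varphi$ is a convex function on $\mathbb{R}$, and that directly from the definitions
\begin{align*}
\rho_+(x, y) = \|x\|\,\varphi'_+(0), \qquad \rho_-(x, y) = \|x\|\,\varphi'_-(0),
\end{align*}
where $\varphi'_{\pm}(0)$ denote the right- and left-hand derivatives of $\varphi$ at $0$ (these exist precisely because $\varphi$ is convex). The crucial reformulation is that $x \perp_B y$, i.e. $\|x + ty\| \geq \|x\|$ for every $t \in \mathbb{R}$, says exactly that $\varphi$ attains its global minimum at $t = 0$. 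Thus the lemma reduces to the elementary fact that a convex function on $\mathbb{R}$ has a minimum at the interior point $0$ precisely when $\varphi'_-(0) \leq 0 \leq \varphi'_+(0)$, after which I multiply through by $\|x\| \geq 0$.

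For the implication (i) $\Rightarrow$ (ii) I would argue directly. Assuming $\|x + ty\| \geq \|x\|$ for all $t$, for $t > 0$ the difference quotient $\frac{\|x + ty\| - \|x\|}{t}$ is nonnegative, so letting $t \to 0^+$ gives $\varphi'_+(0) \geq 0$ and hence $\rho_+(x, y) \geq 0$; for $t < 0$ the same difference quotient is nonpositive, so $\varphi'_-(0) \leq 0$ and $\rho_-(x, y) \leq 0$. The degenerate case $x = 0$ is handled separately but trivially, since then $\|x\| = 0$ forces $\rho_{\pm}(x, y) = 0$ while $0 \perp_B y$ holds automatically.

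For the converse (ii) $\Rightarrow$ (i) I would invoke convexity more seriously. Assuming $x \neq 0$, so that $\varphi'_-(0) \leq 0 \leq \varphi'_+(0)$, I would use the monotonicity of secant slopes for convex functions: for $t > 0$ one has $\frac{\varphi(t) - \varphi(0)}{t} \geq \varphi'_+(0) \geq 0$, whence $\varphi(t) \geq \varphi(0)$; for $t < 0$ one has $\frac{\varphi(t) - \varphi(0)}{t} \leq \varphi'_-(0) \leq 0$, and clearing the negative denominator $t$ flips the inequality to give $\varphi(t) - \varphi(0) \geq 0$ once more. Combining both ranges yields $\|x + ty\| \geq \|x\|$ for all $t$, that is, $x \perp_B y$.

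I expect the only delicate points to be bookkeeping rather than genuine obstacles: correctly tracking the sign reversal when $t < 0$ (both when passing to the limit in the forward direction and when clearing the denominator in the converse), and isolating the degenerate case $x = 0$, where $\|x\| = 0$ collapses both $\rho_{\pm}$ to zero. The substantive content is carried entirely by the convexity of $t \mapsto \|x + ty\|$, which both guarantees existence of the one-sided derivatives and supplies the secant-slope bounds needed for the converse.
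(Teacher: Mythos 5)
Your proof is correct. Note that the paper itself offers no proof of this lemma: it is quoted verbatim from Dragomir's book (Theorem 50 of \cite{Dra}), so there is nothing internal to compare against. Your argument --- identifying $\rho_{\pm}(x,y)$ with $\|x\|$ times the one-sided derivatives of the convex function $\varphi(t)=\|x+ty\|$ at $0$, reading $x\perp_B y$ as the statement that $\varphi$ attains its global minimum at $0$, and then using the monotonicity of secant slopes (in particular $\varphi'_+(0)=\inf_{t>0}\frac{\varphi(t)-\varphi(0)}{t}$ and $\varphi'_-(0)=\sup_{t<0}\frac{\varphi(t)-\varphi(0)}{t}$) for the converse --- is exactly the standard proof of this characterization, and your handling of the sign flip for $t<0$ and of the degenerate case $x=0$ is sound.
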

\begin{theorem}\label{th.001}
Let $X$ be a normed space, and let $\lambda \in [0, 1]$. Then $\perp_{{\rho}_{\lambda}} \subseteq \perp_{B}$.
\end{theorem}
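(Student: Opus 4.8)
The plan is to reduce everything to the Draghici characterization recorded in Lemma \ref{L21}: in order to prove $x \perp_B y$ it suffices to verify the two-sided sign condition $\rho_-(x,y) \leq 0 \leq \rho_+(x,y)$. So, starting from the hypothesis $x \perp_{{\rho}_{\lambda}} y$, that is,
\[
\lambda\rho_-(x,y) + (1 - \lambda)\rho_+(x,y) = 0 \qquad (\lambda\in[0,1]),
\]
the entire task becomes extracting the two inequalities $\rho_-(x,y)\leq 0$ and $\rho_+(x,y)\geq 0$ from this single scalar identity.

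The crucial auxiliary fact is the elementary inequality $\rho_-(x,y) \leq \rho_+(x,y)$, which is item (i) of the list of properties of the norm derivatives. The key observation is then that ${\rho}_{\lambda}(x,y)$ is a convex combination of the two real numbers $\rho_-(x,y)$ and $\rho_+(x,y)$, and hence lies between them; since this convex combination vanishes while $\rho_- \leq \rho_+$, the smaller quantity $\rho_-(x,y)$ cannot be strictly positive and the larger quantity $\rho_+(x,y)$ cannot be strictly negative.

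To make this precise I would argue by contradiction for each inequality separately. Assuming $\rho_-(x,y) > 0$ forces $\rho_+(x,y) \geq \rho_-(x,y) > 0$, so both summands are nonnegative and at least one is strictly positive, giving ${\rho}_{\lambda}(x,y) > 0$, a contradiction. Symmetrically, assuming $\rho_+(x,y) < 0$ yields $\rho_-(x,y) \leq \rho_+(x,y) < 0$ and hence ${\rho}_{\lambda}(x,y) < 0$, again a contradiction. Therefore $\rho_-(x,y) \leq 0 \leq \rho_+(x,y)$, and Lemma \ref{L21} immediately gives $x \perp_B y$, which is the desired inclusion $\perp_{{\rho}_{\lambda}} \subseteq \perp_B$.

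I expect the proof to be essentially a one-line sign analysis, so there is no substantial obstacle. The only point demanding genuine care is the behaviour at the endpoints $\lambda = 0$ and $\lambda = 1$, where ${\rho}_{\lambda}$ degenerates into $\rho_+$ or $\rho_-$ alone; one must confirm that the convexity argument still delivers \emph{both} sign inequalities rather than only one. A quick check shows it does: when $\lambda = 0$ the hypothesis reads $\rho_+(x,y)=0$, whence $\rho_-(x,y)\leq\rho_+(x,y)=0$, and when $\lambda = 1$ it reads $\rho_-(x,y)=0$, whence $\rho_+(x,y)\geq\rho_-(x,y)=0$, so the sign condition holds in all cases.
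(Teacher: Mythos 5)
Your proposal is correct and follows essentially the same route as the paper: both arguments combine the elementary inequality $\rho_-(x,y)\leq\rho_+(x,y)$ with the vanishing of the convex combination ${\rho}_{\lambda}(x,y)$ to obtain $\rho_-(x,y)\leq 0\leq\rho_+(x,y)$, and then invoke Lemma \ref{L21}. Your version merely spells out the sign analysis (including the endpoint cases $\lambda=0,1$) that the paper states in one line.
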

\begin{proof}
Let $x, y\in X$ and $x \perp_{{\rho}_{\lambda}}y$.
Thus $\lambda\rho_-(x, y) = (\lambda - 1)\rho_+(x, y)$.
Since $\rho_-(x, y) \leq \rho_+(x, y)$, we get $\rho_-(x, y) \leq 0 \leq \rho_+(x, y)$.
Therefore, by Lemma \ref{L21}, we conclude that $x \perp_{B} y$.
Hence $\perp_{{\rho}_{\lambda}} \subseteq \perp_{B}$.
\end{proof}
To get our next result, we need the following lemma.
\begin{lemma}\cite[Corollary 11]{Dra}\label{L24}
Let $X$ be a normed space and let $x, y\in X$ with $x\neq 0$. Then there exists a number
$t\in\mathbb{R}$ such that $x\perp_{B} tx + y$.
\end{lemma}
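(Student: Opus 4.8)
The plan is to reduce the existence of $t$ to solving a pair of linear inequalities, using the additivity property (iii) of the norm derivatives together with the characterization of Birkhoff--James orthogonality in Lemma \ref{L21}. By Lemma \ref{L21}, the desired conclusion $x\perp_B(tx+y)$ is equivalent to
\[
\rho_-(x, tx + y) \leq 0 \leq \rho_+(x, tx + y).
\]
So first I would rewrite both quantities by means of property (iii) of $\rho_{\pm}$, namely $\rho_{\pm}(x, tx + y) = t\|x\|^2 + \rho_{\pm}(x, y)$. This turns the two-sided condition into
\[
-\rho_+(x, y) \leq t\|x\|^2 \leq -\rho_-(x, y),
\]
an explicit constraint on the single real parameter $t$.

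Next I would check that the admissible range for $t$ is nonempty. Since $x \neq 0$ we have $\|x\|^2 > 0$, so the constraint is equivalent to
\[
\frac{-\rho_+(x, y)}{\|x\|^2} \leq t \leq \frac{-\rho_-(x, y)}{\|x\|^2}.
\]
By property (i) we know $\rho_-(x, y) \leq \rho_+(x, y)$, whence $-\rho_+(x, y) \leq -\rho_-(x, y)$ and the interval is genuinely nonempty. Any $t$ in this range works; a concrete choice is the midpoint $t = -\rho(x,y)/\|x\|^2 = -\big(\rho_-(x,y) + \rho_+(x,y)\big)/(2\|x\|^2)$. Substituting back and invoking Lemma \ref{L21} once more yields $x\perp_B(tx+y)$, which finishes the argument.

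I do not expect a serious obstacle, since everything is essentially linear in $t$ once property (iii) and Lemma \ref{L21} are in hand. The one point deserving care is a direction-of-orthogonality subtlety: a tempting alternative is to minimize the convex, coercive function $t \mapsto \|y + tx\|$ and take $t_0$ at the minimizer, but that argument only produces $(y + t_0 x)\perp_B x$, which is the \emph{reverse} relation and need not coincide with $x\perp_B(y + t_0 x)$, because Birkhoff--James orthogonality is not symmetric. The norm-derivative route sketched above sidesteps this pitfall entirely, so that is the approach I would follow.
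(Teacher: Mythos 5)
The paper does not prove this lemma; it is quoted from \cite[Corollary 11]{Dra} without proof, so there is no in-paper argument to compare against. Your proposal is a correct and self-contained derivation from material the paper already has on hand: combining property (iii) of the norm derivatives, $\rho_{\pm}(x, tx+y) = t\|x\|^2 + \rho_{\pm}(x,y)$, with the characterization in Lemma \ref{L21} turns the problem into finding $t$ with $-\rho_+(x,y) \leq t\|x\|^2 \leq -\rho_-(x,y)$, and the inequality $\rho_-(x,y) \leq \rho_+(x,y)$ from property (i) together with $\|x\| \neq 0$ guarantees this interval is nonempty. This is essentially the standard proof of the lemma (it is how one typically deduces it from the Dragomir/James characterization of Birkhoff--James orthogonality via one-sided Gateaux derivatives). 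Your closing caveat is also well taken: minimizing $t \mapsto \|y + tx\|$ only yields $(y + t_0x) \perp_B x$, which is the reversed, generally inequivalent relation, so the norm-derivative route is the right one. No gaps.
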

\begin{theorem}
Let $(X, \|\cdot\|)$ be a normed space, and let $\lambda \in [0, 1]$. The following conditions are equivalent:
\begin{itemize}
\item[(i)] $\perp_{B}\subseteq \perp_{{\rho}_{\lambda}}$.
\item[(ii)] $\perp_{B} = \perp_{{\rho}_{\lambda}}$.
\item[(iii)] $X$ is smooth.
\end{itemize}
\end{theorem}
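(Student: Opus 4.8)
The plan is to prove the cycle $(ii)\Rightarrow(i)\Rightarrow(iii)\Rightarrow(ii)$, of which only $(i)\Rightarrow(iii)$ requires real work. The implication $(ii)\Rightarrow(i)$ is immediate. For $(iii)\Rightarrow(ii)$ I would argue that smoothness gives $\rho_{-}(x,y)=\rho_{+}(x,y)$ for all $x,y$, whence ${\rho}_{\lambda}(x,y)=\rho_{+}(x,y)=\rho_{-}(x,y)$ regardless of $\lambda$; then Lemma \ref{L21} shows that $x\perp_B y$ forces $\rho_{-}(x,y)\le 0\le\rho_{+}(x,y)$, i.e.\ $\rho_{\pm}(x,y)=0$, so $x\perp_{{\rho}_{\lambda}}y$. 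Combining this inclusion $\perp_B\subseteq\perp_{{\rho}_{\lambda}}$ with the reverse inclusion already established in Theorem \ref{th.001} yields the equality in $(ii)$.

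The heart of the matter is $(i)\Rightarrow(iii)$. Fix a nonzero $x\in X$ and an arbitrary $y\in X$; the goal is to show $\rho_{-}(x,y)=\rho_{+}(x,y)$, which by the criterion recalled in the introduction characterizes smoothness at $x$. First I would determine exactly which scalars $t$ satisfy $x\perp_B tx+y$. By Lemma \ref{L21} this happens iff $\rho_{-}(x,tx+y)\le 0\le\rho_{+}(x,tx+y)$, and property (iii) of $\rho_{\pm}$ from the introduction turns these into the two linear inequalities $t\|x\|^2+\rho_{-}(x,y)\le 0$ and $t\|x\|^2+\rho_{+}(x,y)\ge 0$. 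Hence the admissible $t$ form the closed interval with endpoints $-\rho_{+}(x,y)\|x\|^{-2}$ and $-\rho_{-}(x,y)\|x\|^{-2}$ (nonempty since $\rho_{-}\le\rho_{+}$, in accordance with Lemma \ref{L24}). Now hypothesis $(i)$ forces every such $t$ to satisfy $x\perp_{{\rho}_{\lambda}}tx+y$ as well; by Theorem \ref{T23}(iii) this reads $t\|x\|^2+{\rho}_{\lambda}(x,y)=0$, an equation with the unique solution $t=-{\rho}_{\lambda}(x,y)\|x\|^{-2}$. A whole interval of values being pinned to one point forces the endpoints to coincide, that is $\rho_{-}(x,y)=\rho_{+}(x,y)$. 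Since $y$ was arbitrary, $X$ is smooth at $x$, and since $x$ was arbitrary, $X$ is smooth.

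I expect the main conceptual obstacle to be recognizing that one must exploit the entire family of Birkhoff--James perturbations $tx+y$ rather than the single perturbation provided by Lemma \ref{L24}: a lone value of $t$ is always consistent with $\perp_{{\rho}_{\lambda}}$, because the convex-combination value $-{\rho}_{\lambda}(x,y)\|x\|^{-2}$ automatically lies between $-\rho_{+}(x,y)\|x\|^{-2}$ and $-\rho_{-}(x,y)\|x\|^{-2}$. The contradiction only appears once I use that $t\mapsto{\rho}_{\lambda}(x,tx+y)$ is affine with positive slope $\|x\|^2$, so it vanishes at a single point, whereas the Birkhoff--James condition tolerates a whole interval of $t$'s whenever $\rho_{-}(x,y)<\rho_{+}(x,y)$. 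Equivalently, and perhaps more transparently in the write-up, I could instead plug in only the two endpoints $t=-\rho_{+}(x,y)\|x\|^{-2}$ and $t=-\rho_{-}(x,y)\|x\|^{-2}$, deducing ${\rho}_{\lambda}(x,y)=\rho_{+}(x,y)$ from the first and ${\rho}_{\lambda}(x,y)=\rho_{-}(x,y)$ from the second, which again yields $\rho_{-}(x,y)=\rho_{+}(x,y)$.
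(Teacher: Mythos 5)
Your proof is correct, and the key implication is handled by a genuinely different argument than the paper's. The paper proves $(ii)\Rightarrow(iii)$ by taking a \emph{single} $t$ with $x\perp_B tx+y$ (Lemma \ref{L24}) and then applying the hypothesis to both $x$ and $-x$, exploiting the identity ${\rho}_{\lambda}(-x,\cdot)=-{\rho}_{1-\lambda}(x,\cdot)$; adding the two resulting equations gives $(2\lambda-1)\big(\rho_-(x,y)-\rho_+(x,y)\big)=0$, which is vacuous at $\lambda=\tfrac12$, so the paper must dispose of that case separately by citing \cite[Proposition 2.2.4]{A.S.T}. You instead prove $(i)\Rightarrow(iii)$ by observing, via Lemma \ref{L21} and the translation property, that the set of admissible $t$ is the whole closed interval with endpoints $-\rho_+(x,y)\|x\|^{-2}$ and $-\rho_-(x,y)\|x\|^{-2}$, while the affine equation $t\|x\|^2+{\rho}_{\lambda}(x,y)=0$ admits only one solution; evaluating at the two endpoints gives ${\rho}_{\lambda}(x,y)=\rho_+(x,y)$ and ${\rho}_{\lambda}(x,y)=\rho_-(x,y)$ simultaneously. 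This buys you a uniform treatment of all $\lambda\in[0,1]$, including $\lambda=\tfrac12$, with no external citation and no need for the $-x$ trick; the price is that you must fully characterize the interval of Birkhoff--James perturbations rather than merely invoke the existence of one, but that characterization is an immediate consequence of Lemma \ref{L21}, which the paper uses anyway. Your closing remark correctly identifies why a single generic $t$ cannot suffice under hypothesis $(i)$ alone: the value $-{\rho}_{\lambda}(x,y)\|x\|^{-2}$ always lies in the admissible interval, so one must use either both endpoints or, as the paper does, the reflected vector $-x$.
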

\begin{proof}
(i)$\Rightarrow$(ii) This implication follows immediately from Theorem \ref{th.001}.

(ii)$\Rightarrow$(iii) Suppose (ii) holds. If $\lambda = \frac{1}{2}$, then \cite[Proposition 2.2.4]{A.S.T}
implies that $X$ is smooth. Now, let $\lambda \neq \frac{1}{2}$ and $x, y\in X$.
We should show that $\rho_-(x,y)=\rho_+(x,y)$.
We may assume that $x\neq0$, otherwise $\rho_-(x,y)=\rho_+(x,y)$ trivially holds.
By Lemma \ref{L24}, there exists a number $t\in\mathbb{R}$ such that
$x\perp_{B} tx + y$.
From the assumption, we have ${\rho}_{\lambda}(x, tx + y) = 0$. Hence
$t{\|x\|}^2 + {\rho}_{\lambda}(x, y) = 0$, or equivalently,
\begin{align}\label{I25}
t{\|x\|}^2 + \lambda\rho_-(x, y) + (1 - \lambda) \rho_+(x, y) = 0.
\end{align}
We also have $-x\perp_{B} tx + y$, and so ${\rho}_{\lambda}(-x, tx + y) = 0$.
Thus $-t{\|x\|}^2 - {\rho}_{1 - \lambda}(x, y) = 0$, or equivalently,
\begin{align}\label{I26}
-t{\|x\|}^2 - (1 - \lambda)\rho_-(x, y) - \lambda \rho_+(x, y) = 0.
\end{align}
Therefore, by (\ref{I25}) and (\ref{I26}), we have
\begin{align*}
(2\lambda - 1)\rho_-(x, y) + (1 - 2\lambda) \rho_+(x, y)= 0.
\end{align*}
Consequently, $\rho_-(x,y)=\rho_+(x,y)$. Therefore $X$ is smooth.

(iii)$\Rightarrow$(i) Suppose that $X$ is smooth and that
$x, y\in X$ such that $x\perp_{B}y$. It follows from Lemma \ref{L21} that
$\rho_-(x, y) = \rho_+(x, y) = 0$, and this yields that $x\perp_{{\rho}_{\lambda}}y$.
\end{proof}
For nonsmooth spaces, the orthogonalities $\perp_{{\rho}_{\lambda}}$ and $\perp_{B}$
may not coincide.
\begin{example}
Consider the real space $X = \mathbb{R}^2$ equipped with the norm
$\|(\alpha, \beta)\| = \max\{|\alpha|, |\beta|\}$.
Let $x = (1, 1)$ and $y = (0, -1)$. Then, for every $\gamma \in \mathbb{R}$, we have
\begin{align*}
\|x + \gamma y\| = \|(1, 1 - \gamma)\| = \max\{1, |1 - \gamma|\}\geq 1 = \|x\|.
\end{align*}
Hence $x \perp_{B} y$. On the other hand, straightforward computations show that
$\rho_-(x, y) = -1$ and $\rho_+(x, y) = 0$.
It follows that
${\rho}_{\lambda}(x, y) =  -\lambda$.
Thus $x \not\perp_{{\rho}_{\lambda}}y$.
\end{example}
The following result is proved in \cite[Theorem 1]{C.W.2} and \cite[Theorem 3.1]{M.Z.D}.
\begin{theorem}
Let $(X, \|\cdot\|)$ be a real normed space. Then the following conditions are equivalent:
\begin{align*}
&(1) \perp_{\rho_-}\subseteq\perp_{\rho_+}.\quad (2) \perp_{\rho_+}\subseteq\perp_{\rho_-}.\quad (3) \perp_{\rho}\subseteq\perp_{\rho_-}.\\
&(4) \perp_{\rho_-}\subseteq\perp_{\rho}.\quad \,\,(5) \perp_{\rho} \subseteq\perp_{\rho_+}.\quad \,\,(6) \perp_{\rho_+}\subseteq\perp_{\rho}.\\
&(7) \perp_{\rho_*}\subseteq\perp_{\rho_-}.\quad (8) \perp_{\rho_*}\subseteq\perp_{\rho_+}.\quad (9) \perp_{\rho_*}\subseteq\perp_{\rho}.\\
&(10) \perp_{\rho}\subseteq\perp_{\rho_*}.\quad (11) \perp_{B}\subseteq\perp_{\rho_*}.\quad(12)\mbox{ $X$ is smooth}.
\end{align*}
\end{theorem}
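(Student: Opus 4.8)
The plan is to prove that condition (12), smoothness, is equivalent to each of (1)--(11); since all eleven are thereby equivalent to one common condition, they are equivalent to one another. I would organize this into two blocks: first (12) implies every one of (1)--(11), and then each of (1)--(11) implies (12).

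The forward block is immediate. If $X$ is smooth, then $\rho_-(x,y) = \rho_+(x,y)$ for all $x,y$; writing this common value as $\rho(x,y)$, one has $\rho_*(x,y) = \rho(x,y)^2$, so the functionals $\rho_-,\rho_+,\rho,\rho_*$ vanish on exactly the same pairs. Hence $\perp_{\rho_-},\perp_{\rho_+},\perp_{\rho},\perp_{\rho_*}$ all coincide and (1)--(10) hold as equalities. For (11), if $x\perp_B y$ then $\rho_-(x,y)\le 0\le\rho_+(x,y)$ by Lemma \ref{L21}, which under smoothness forces $\rho_\pm(x,y)=0$ and hence $x\perp_{\rho_*}y$.

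For the converse block I would argue the contrapositive uniformly. Assume $X$ is not smooth, so there is a nonzero $x$ and a $y_0$ with $\rho_-(x,y_0)<\rho_+(x,y_0)$. The shift identity $\rho_\pm(x,tx+y_0)=t\|x\|^2+\rho_\pm(x,y_0)$ shows that, as $t$ runs over $\mathbb{R}$, the pair $\big(\rho_-(x,tx+y_0),\rho_+(x,tx+y_0)\big)$ is the fixed pair $\big(\rho_-(x,y_0),\rho_+(x,y_0)\big)$ translated by $t\|x\|^2$, so the strictly positive gap between the two entries is preserved while the pair slides along the line. Choosing $t$ so as to place, respectively, the left entry, the right entry, or the midpoint at $0$, and setting $z=tx+y_0$, I obtain three configurations: (A) $\rho_-(x,z)=0<\rho_+(x,z)$; (B) $\rho_-(x,z)<0=\rho_+(x,z)$; and (C) $\rho_-(x,z)=-\rho_+(x,z)<0$, so that $\rho(x,z)=0$ while $\rho_*(x,z)=-\rho_+(x,z)^2\ne 0$.

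These three positions already refute all eleven containments. Configuration (A) gives $x\perp_{\rho_-}z$ and $x\perp_{\rho_*}z$ while $x\not\perp_{\rho_+}z$ and $x\not\perp_{\rho}z$, breaking (1),(4),(8),(9); configuration (B) symmetrically breaks (2),(6),(7); and configuration (C), where $x\perp_{\rho}z$ and $x\perp_B z$ but $\rho_*(x,z)\ne 0$, breaks (3),(5),(10),(11). Consequently, if any one of (1)--(11) holds then $X$ must be smooth. The only point requiring care is the nonlinear functional $\rho_*$: since $x\perp_{\rho_*}z$ means merely that one endpoint $\rho_-(x,z)$ or $\rho_+(x,z)$ vanishes, it is precisely the two endpoint configurations (A) and (B) that make $\perp_{\rho_*}$ strictly larger than the other relations in the nonsmooth case. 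Beyond this, the work is routine bookkeeping; the conceptual content is simply that nonsmoothness produces a nondegenerate gap that can be translated freely, which forces every containment to fail at once.
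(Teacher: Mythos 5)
Your proof is correct. Note, however, that the paper does not actually prove this theorem: it simply records it and cites \cite[Theorem 1]{C.W.2} and \cite[Theorem 3.1]{M.Z.D}, so there is no internal argument to compare against. Your self-contained treatment is sound: the forward direction is routine once one observes that smoothness collapses $\rho_-,\rho_+,\rho$ to a single functional with $\rho_*=\rho^2$, and that Lemma \ref{L21} then upgrades $\perp_B$ to $\perp_{\rho_*}$. The contrapositive direction is the real content, and your uniform device --- using $\rho_{\pm}(x,tx+y_0)=t\|x\|^2+\rho_{\pm}(x,y_0)$ to slide a nondegenerate gap $\rho_-(x,y_0)<\rho_+(x,y_0)$ so that its left endpoint, right endpoint, or midpoint lands at $0$ --- does refute all eleven containments with just three witnesses; I verified each of the assignments (A)$\to$(1),(4),(8),(9), (B)$\to$(2),(6),(7), (C)$\to$(3),(5),(10),(11), including the use of Lemma \ref{L21} to get $x\perp_B z$ in configuration (C). Two minor points worth making explicit if you write this up: the witness $y_0$ cannot be a scalar multiple of $x$ (else $\rho_-=\rho_+$ on that pair), so $z=tx+y_0\neq 0$ for every $t$; and nonsmoothness yields a strict inequality $\rho_-<\rho_+$ rather than mere inequality because $\rho_-\leq\rho_+$ always holds. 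This is essentially the standard argument in the cited sources, organized cleanly around a single translation lemma.
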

The relations $\perp_{\rho_-}$, $\perp_{\rho_+}$, $\perp_{\rho}$, and
$\perp_{{\rho}_{\lambda}}$ are generally incomparable. The following
example illustrates this fact.
\begin{example}\label{ex.001}
Consider the real normed space $X = \mathbb{R}^2$ with the norm
$\|(\alpha, \beta)\| = \max\{|\alpha|, |\beta|\}$.

(i) Let $x = (1, 1)$ and $y = (-\frac{1}{2\lambda}, \frac{1}{2(1 - \lambda)})$.
Simple computations show that
\begin{align*}
\rho_-(x, y) = -\frac{1}{2\lambda} \quad \mbox{and}
\quad \rho_+(x, y) = \frac{1}{2(1 - \lambda)}.
\end{align*}
So we get
\begin{align*}
\rho(x, y) = \frac{2\lambda - 1}{4\lambda(1- \lambda)}
\quad \mbox{and} \quad {\rho}_{\lambda}(x, y) = 0.
\end{align*}
Hence $\perp_{{\rho}_{\lambda}}\nsubseteq \perp_{\rho_-}$,
$\perp_{{\rho}_{\lambda}}\nsubseteq \perp_{\rho_+}$,
and $\perp_{{\rho}_{\lambda}}\nsubseteq \perp_{\rho}$.

(ii) Let $z = (1, 1)$, $w = (0, 1)$, $u = (0, -1)$, and $v = (1, -1)$.
It is not hard to compute
\begin{align*}
\rho_-(z, w) = 0, \quad \rho_+(z, w) = 1, \quad {\rho}_{\lambda}(z, w) = 1 - \lambda,
\end{align*}
\begin{align*}
\rho_-(z, u) = -1, \quad \rho_+(z, u) = 0, \quad {\rho}_{\lambda}(z, u) = -\lambda,
\end{align*}
and
\begin{align*}
\rho_-(z, v) = -1, \quad \rho_+(z, v) = 1, \quad \rho(z, v) = 0,
\quad {\rho}_{\lambda}(z, v) = 1 - 2\lambda.
\end{align*}
Thus $\perp_{\rho_-}\nsubseteq \perp_{{\rho}_{\lambda}}$,
$\perp_{\rho_+}\nsubseteq \perp_{{\rho}_{\lambda}}$, and
$\perp_{\rho}\nsubseteq \perp_{{\rho}_{\lambda}}$.
\end{example}
The following result gives some characterizations of the smooth normed spaces
based on the ${\rho}_{\lambda}$-orthogonality.
\begin{theorem}\label{th.0101}
Let $(X, \|\cdot\|)$ be a normed space, and let $\frac{1}{2} \neq \lambda \in [0, 1]$. The following conditions are equivalent:
\begin{itemize}
\item[(i)] $\perp_{\rho}\subseteq \perp_{{\rho}_{\lambda}}$.
\item[(ii)] $\perp_{{\rho}_{\lambda}}\subseteq\perp_{\rho}$.
\item[(iii)] $\perp_{{\rho}_{\lambda}} = \perp_{\rho}$.
\item[(iv)] $X$ is smooth.
\end{itemize}
\end{theorem}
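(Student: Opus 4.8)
The plan is to close the chain of implications through smoothness, treating (iv) as the hub. First I would observe that (iv)$\Rightarrow$(iii) is immediate: if $X$ is smooth, then $\rho_-(x,y)=\rho_+(x,y)$ for all $x,y\in X$, whence $\rho(x,y)={\rho}_{\lambda}(x,y)=\rho_-(x,y)$ and the two orthogonality relations coincide. The implications (iii)$\Rightarrow$(i) and (iii)$\Rightarrow$(ii) are trivial, since an equality of relations gives in particular both inclusions. Thus the entire content of the theorem lies in the two implications (i)$\Rightarrow$(iv) and (ii)$\Rightarrow$(iv), and I would prove both by the same device.

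For (i)$\Rightarrow$(iv), assume $\perp_{\rho}\subseteq\perp_{{\rho}_{\lambda}}$ and fix $x,y\in X$ with $x\neq0$ (the case $x=0$ being trivial, as $\rho_{\pm}(0,y)=0$). The key point is that part (iii) of Theorem \ref{T23}, applied with $\lambda=\tfrac12$, gives the explicit affine dependence $\rho(x,tx+y)=t\|x\|^2+\rho(x,y)$, so I can simply \emph{solve} for the scalar that annihilates $\rho$, namely $t_0=-\rho(x,y)/\|x\|^2$; this bypasses Lemma \ref{L24}, which was needed only because Birkhoff--James orthogonality admits no such closed form. With $z:=t_0x+y$ one has $\rho(x,z)=0$, i.e. $x\perp_{\rho}z$, so the hypothesis yields $x\perp_{{\rho}_{\lambda}}z$, that is ${\rho}_{\lambda}(x,z)=0$. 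Expanding ${\rho}_{\lambda}(x,z)=t_0\|x\|^2+{\rho}_{\lambda}(x,y)=-\rho(x,y)+{\rho}_{\lambda}(x,y)$ via part (iii) of Theorem \ref{T23}, I conclude
\begin{align*}
{\rho}_{\lambda}(x,y)=\rho(x,y)\qquad(x,y\in X).
\end{align*}
The implication (ii)$\Rightarrow$(iv) runs symmetrically: now I choose $t_0=-{\rho}_{\lambda}(x,y)/\|x\|^2$ so that ${\rho}_{\lambda}(x,z)=0$ with $z=t_0x+y$, apply the inclusion $\perp_{{\rho}_{\lambda}}\subseteq\perp_{\rho}$ to obtain $\rho(x,z)=0$, and expand to reach the very same identity ${\rho}_{\lambda}(x,y)=\rho(x,y)$ for all $x,y$.

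Finally I would extract smoothness from this identity. Writing out both functionals, ${\rho}_{\lambda}(x,y)=\rho(x,y)$ becomes $\lambda\rho_-(x,y)+(1-\lambda)\rho_+(x,y)=\tfrac12\rho_-(x,y)+\tfrac12\rho_+(x,y)$, that is,
\begin{align*}
\left(\lambda-\tfrac12\right)\big(\rho_-(x,y)-\rho_+(x,y)\big)=0.
\end{align*}
Here is where the hypothesis $\lambda\neq\tfrac12$ is essential: dividing by $\lambda-\tfrac12$ forces $\rho_-(x,y)=\rho_+(x,y)$ for all $x,y\in X$, which is precisely the statement that $X$ is smooth. I do not expect a genuine obstacle in this argument; the only points requiring care are to carry out the two reductions symmetrically and to notice that $\lambda\neq\tfrac12$ is exactly what makes the final linear equation nondegenerate. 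Indeed, for $\lambda=\tfrac12$ the functionals $\rho$ and ${\rho}_{\lambda}$ are literally identical, so (i)--(iii) hold trivially in every normed space without forcing (iv), which is why that value must be excluded.
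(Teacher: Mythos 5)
Your proposal is correct and follows essentially the same route as the paper: both use the explicit element $z=-\frac{\rho(x,y)}{\|x\|^2}x+y$ (which is $\rho$-orthogonal to $x$ by Theorem \ref{T23}(iii)) to derive the identity ${\rho}_{\lambda}(x,y)=\rho(x,y)$, and then extract $\rho_-=\rho_+$ from $\lambda\neq\tfrac12$. The paper writes out only (i)$\Rightarrow$(iv) and declares the remaining implications similar, whereas you spell out the symmetric argument for (ii)$\Rightarrow$(iv) and the trivial closure through (iii) and (iv); this is just a more explicit presentation of the same proof.
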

\begin{proof}
(i)$\Rightarrow$(iv) Let $x, y\in X\setminus\{0\}$. We have
$x\perp_{\rho}\left(-\frac{\rho(x,y)}{\|x\|^2}x + y\right)$.
It follows from (i) that $x\perp_{{\rho}_{\lambda}}\left(-\frac{\rho(x,y)}{\|x\|^2}x + y\right)$.
From Theorem \ref{T23} (iii), we deduce that
\begin{align*}
-\rho(x,y) + {\rho}_{\lambda}(x,y) =
{\rho}_{\lambda}\left(x, -\frac{\rho(x, y)}{\|x\|^2}x + y\right) = 0.
\end{align*}
Thus ${\rho}_{\lambda}(x, y) = \rho(x,y)$. It ensures that
$(2\lambda - 1)\rho_-(x, y) = (2\lambda - 1)\rho_+(x, y)$,
and therefore we get $\rho_-(x,y)=\rho_+(x,y)$.
It follows that $X$ is smooth.

The other implications can be proved similarly.
\end{proof}
If we consider $x\perp_{\rho_+}\left(-\frac{\rho_+(x,y)}{\|x\|^2}x + y\right)$ instead of
$x\perp_{\rho}\left(-\frac{\rho(x,y)}{\|x\|^2}x + y\right)$, then, using the same reasoning
as in the proof of Theorem \ref{th.0101}, we get the next result.
\begin{theorem}
Let $(X, \|\cdot\|)$ be a normed space, and let $\lambda \in (0, 1]$. The following conditions are equivalent:
\begin{itemize}
\item[(i)] $\perp_{\rho_+} \subseteq \perp_{{\rho}_{\lambda}}$
\item[(ii)] $\perp_{{\rho}_{\lambda}}\subseteq\perp_{\rho_+}$.
\item[(iii)] $\perp_{{\rho}_{\lambda}} = \perp_{\rho_+}$.
\item[(iv)] $X$ is smooth.
\end{itemize}
\end{theorem}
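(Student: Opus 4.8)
The plan is to follow the template of the proof of Theorem \ref{th.0101}, simply replacing the functional $\rho$ by $\rho_+$ throughout. The structure of the equivalence is then (i)$\Rightarrow$(iv), (ii)$\Rightarrow$(iv), and (iv)$\Rightarrow$(iii)$\Rightarrow$(i),(ii).

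First I would prove (i)$\Rightarrow$(iv). Fix $x, y\in X\setminus\{0\}$. Since $\rho_+$ enjoys property (iii) of the norm derivatives, the vector $z = -\frac{\rho_+(x,y)}{\|x\|^2}x + y$ satisfies
\begin{align*}
\rho_+(x, z) = -\frac{\rho_+(x,y)}{\|x\|^2}\|x\|^2 + \rho_+(x,y) = 0,
\end{align*}
so $x\perp_{\rho_+}z$. Invoking (i) gives $x\perp_{{\rho}_{\lambda}}z$, and applying Theorem \ref{T23}(iii) to ${\rho}_{\lambda}$ yields
\begin{align*}
0 = {\rho}_{\lambda}(x, z) = -\rho_+(x,y) + {\rho}_{\lambda}(x,y),
\end{align*}
that is, ${\rho}_{\lambda}(x,y) = \rho_+(x,y)$. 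Expanding ${\rho}_{\lambda} = \lambda\rho_- + (1-\lambda)\rho_+$, this reads $\lambda\big(\rho_-(x,y) - \rho_+(x,y)\big) = 0$. Here is the one place where the hypothesis enters: because $\lambda\in(0,1]$ we have $\lambda\neq 0$, so we may cancel it to obtain $\rho_-(x,y) = \rho_+(x,y)$. As $x,y$ were arbitrary, $X$ is smooth.

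Next I would treat (ii)$\Rightarrow$(iv) by the same device, now starting from $w = -\frac{{\rho}_{\lambda}(x,y)}{\|x\|^2}x + y$, which satisfies ${\rho}_{\lambda}(x,w)=0$ by Theorem \ref{T23}(iii), so $x\perp_{{\rho}_{\lambda}}w$; condition (ii) then gives $x\perp_{\rho_+}w$, and property (iii) of $\rho_+$ forces $\rho_+(x,y) = {\rho}_{\lambda}(x,y)$, which is the same identity as before and again yields $\rho_-=\rho_+$. Finally, (iv)$\Rightarrow$(iii) is immediate: if $X$ is smooth then $\rho_-(x,y)=\rho_+(x,y)$ for all $x,y$, whence ${\rho}_{\lambda}=\rho_+$ identically and the two orthogonality relations literally coincide; and (iii) trivially implies both (i) and (ii).

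The computations here are routine, so the only real subtlety — and the point worth double-checking — is the bookkeeping of which coefficient survives at the cancellation step. In Theorem \ref{th.0101} the surviving factor was $2\lambda-1$, forcing $\lambda\neq\tfrac12$; here the surviving factor is $\lambda$ itself, which is precisely why the hypothesis must be strengthened to $\lambda\in(0,1]$. I would also emphasize that the endpoint $\lambda=0$ genuinely has to be excluded, since there ${\rho}_{0}=\rho_+$ and $\perp_{{\rho}_{\lambda}}=\perp_{\rho_+}$ holds trivially for \emph{every} normed space, smooth or not, so the equivalence with smoothness would break down.
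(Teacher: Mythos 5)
Your proposal is correct and is essentially the paper's own argument: the paper proves this theorem by the one-line remark that one should repeat the proof of Theorem \ref{th.0101} with $x\perp_{\rho_+}\left(-\frac{\rho_+(x,y)}{\|x\|^2}x+y\right)$ in place of $x\perp_{\rho}\left(-\frac{\rho(x,y)}{\|x\|^2}x+y\right)$, which is exactly the computation you carry out, including the observation that the surviving coefficient $\lambda$ is why the hypothesis becomes $\lambda\in(0,1]$.
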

In the following result we establish another characterizations of smooth spaces.
\begin{theorem}
Let $(X, \|\cdot\|)$ be a normed space, and let $\lambda \in [0, 1)$. The following conditions are equivalent:
\begin{itemize}
\item[(i)] $\perp_{\rho_-} \subseteq \perp_{{\rho}_{\lambda}}$
\item[(ii)] $\perp_{{\rho}_{\lambda}}\subseteq\perp_{\rho_-}$.
\item[(iii)] $\perp_{{\rho}_{\lambda}} = \perp_{\rho_-}$.
\item[(iv)] $X$ is smooth.
\end{itemize}
\end{theorem}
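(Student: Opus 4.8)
The plan is to mirror the proof of Theorem \ref{th.0101} almost verbatim, replacing the functional $\rho$ throughout by $\rho_-$, exactly as the paragraph preceding the companion $\perp_{\rho_+}$-result suggests. Since four conditions are claimed equivalent, I would reduce the work to a short cycle: prove (iv)$\Rightarrow$(iii), note that (iii)$\Rightarrow$(i) and (iii)$\Rightarrow$(ii) are free, and then carry out the two genuinely substantive directions (i)$\Rightarrow$(iv) and (ii)$\Rightarrow$(iv).

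For (iv)$\Rightarrow$(iii) I would argue that if $X$ is smooth then $\rho_-(x,y)=\rho_+(x,y)$ for all $x,y$, so that ${\rho}_{\lambda}(x,y)=\lambda\rho_-(x,y)+(1-\lambda)\rho_+(x,y)=\rho_-(x,y)$; hence the two functionals agree identically and the orthogonality relations $\perp_{{\rho}_{\lambda}}$ and $\perp_{\rho_-}$ coincide. The implications (iii)$\Rightarrow$(i) and (iii)$\Rightarrow$(ii) are then immediate, as equality of the relations yields both inclusions.

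The heart of the argument is (i)$\Rightarrow$(iv). For nonzero $x,y\in X$, I would use property (iii) of the norm derivative $\rho_-$ to introduce the auxiliary vector $z=-\frac{\rho_-(x,y)}{\|x\|^2}x+y$, for which $\rho_-(x,z)=-\rho_-(x,y)+\rho_-(x,y)=0$, that is, $x\perp_{\rho_-}z$. Hypothesis (i) then gives $x\perp_{{\rho}_{\lambda}}z$, and Theorem \ref{T23}(iii) turns this into $-\rho_-(x,y)+{\rho}_{\lambda}(x,y)=0$, so ${\rho}_{\lambda}(x,y)=\rho_-(x,y)$. Expanding ${\rho}_{\lambda}$ and cancelling the common term $\lambda\rho_-(x,y)$ leaves $(1-\lambda)\big(\rho_+(x,y)-\rho_-(x,y)\big)=0$.

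The only place the hypothesis on $\lambda$ is used — and thus the single point requiring care — is precisely here: to pass from this identity to $\rho_-(x,y)=\rho_+(x,y)$ (and hence smoothness) I need $1-\lambda\neq 0$, which is exactly why the statement restricts to $\lambda\in[0,1)$, in contrast to the companion $\perp_{\rho_+}$-theorem that excludes $\lambda=0$. There is no deeper obstacle; the direction (ii)$\Rightarrow$(iv) runs in the same manner, starting instead from the relation $x\perp_{{\rho}_{\lambda}}\big(-\frac{{\rho}_{\lambda}(x,y)}{\|x\|^2}x+y\big)$ (valid by Theorem \ref{T23}(iii)), applying (ii) to land in $\perp_{\rho_-}$, and invoking property (iii) of $\rho_-$ to reach $\rho_-(x,y)={\rho}_{\lambda}(x,y)$ and the identical cancellation.
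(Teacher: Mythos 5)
Your proposal is correct and follows exactly the route the paper intends: the paper omits this proof with the remark that it is ``similar to the proof of Theorem \ref{th.0101}'', and your argument is precisely that adaptation, with $\rho$ replaced by $\rho_-$, the auxiliary vector $-\frac{\rho_-(x,y)}{\|x\|^2}x+y$ playing the same role, and the cancellation correctly isolating the factor $1-\lambda$ (which is where the restriction $\lambda\in[0,1)$ enters). No gaps.
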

\begin{proof}
The proof is similar to the proof of Theorem \ref{th.0101}, so we omit it.
\end{proof}
It is easy to see that, in a real inner product space $X$, the equality
\begin{align}\label{I27}
{\|x + y\|}^4 - {\|x - y\|}^4 = 8\Big({\|x\|}^2\langle x, y\rangle
+ {\|y\|}^2\langle y, x\rangle\Big) \qquad (x, y \in X)
\end{align}
holds, which is equivalent to the parallelogram equality
\begin{align*}
{\|x + y\|}^2 + {\|x - y\|}^2 = 2\big({\|x\|}^2 + {\|y\|}^2\big) \qquad (x, y \in X).
\end{align*}
In normed spaces, the equality
\begin{align*}
{\|x + y\|}^4 - {\|x - y\|}^4 = 8\Big({\|x\|}^2{\rho}_{\lambda}(x, y)
+ {\|y\|}^2 {\rho}_{\lambda}(y, x)\Big) \qquad (x, y \in X).
\end{align*}
is a generalization of the equality \eqref{I27}.
In the following result we give a sufficient condition for a normed space to be smooth.
We use some ideas of \cite[Theorem 5]{Mil}.
\begin{theorem}
Let $(X, \|\cdot\|)$ be a normed space and $\lambda \in [0, 1]$. Let
\begin{align}\label{I28}
{\|x + y\|}^4 - {\|x - y\|}^4 = 8\Big({\|x\|}^2{\rho}_{\lambda}(x, y)
+ {\|y\|}^2 {\rho}_{\lambda}(y, x)\Big) \qquad (x, y \in X).
\end{align}
Then $X$ is smooth.
\end{theorem}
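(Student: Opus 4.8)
The plan is to read smoothness directly off the quartic identity \eqref{I28} by exploiting a sign symmetry, so that everything collapses onto the gap $\rho_+(\cdot,\cdot)-\rho_-(\cdot,\cdot)$, which is nonnegative by the first listed property of the norm derivatives ($\rho_-\le\rho_+$). The target is the smoothness criterion recalled in the introduction: $X$ is smooth at $x$ precisely when $\rho_+(x,y)=\rho_-(x,y)$ for all $y$.

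First I would substitute $-y$ for $y$ in \eqref{I28}. The left-hand side only changes sign, while Theorem \ref{T23}(ii) turns the right-hand side into $\rho_{1-\lambda}$ terms, namely $\rho_\lambda(x,-y)=-\rho_{1-\lambda}(x,y)$ and $\rho_\lambda(-y,x)=-\rho_{1-\lambda}(y,x)$. Matching this against the original \eqref{I28} should yield
\[
\|x\|^2\big(\rho_\lambda(x,y)-\rho_{1-\lambda}(x,y)\big)+\|y\|^2\big(\rho_\lambda(y,x)-\rho_{1-\lambda}(y,x)\big)=0 .
\]
Since a short computation from the definition gives $\rho_\lambda(x,y)-\rho_{1-\lambda}(x,y)=-(2\lambda-1)\big(\rho_+(x,y)-\rho_-(x,y)\big)$, this rearranges to
\[
(2\lambda-1)\Big[\|x\|^2\big(\rho_+(x,y)-\rho_-(x,y)\big)+\|y\|^2\big(\rho_+(y,x)-\rho_-(y,x)\big)\Big]=0 .
\]

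For $\lambda\neq\tfrac12$ I can cancel the factor $2\lambda-1$. Because $\rho_-\le\rho_+$, the two bracketed summands are nonnegative, so a vanishing sum forces each of them to vanish; choosing $x\neq0$ then gives $\rho_+(x,y)=\rho_-(x,y)$ for every $y$ (the case $x=0$ being trivial). This is exactly the smoothness condition at each point, so $X$ is smooth.

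The hard part is the borderline value $\lambda=\tfrac12$, for which $\rho_\lambda=\rho_{1-\lambda}=\rho$ and the substitution above degenerates into the tautology $0=0$, yielding nothing. One might hope to rescue the argument by scaling: replacing $y$ with $sy$ shows that $s\mapsto\|x+sy\|^4-\|x-sy\|^4$ equals the cubic polynomial $8s\|x\|^2\rho(x,y)+8s^3\|y\|^2\rho(y,x)$, but this only constrains the odd part of $s\mapsto\|x+sy\|^4$ and leaves the gap $\rho_+-\rho_-$, which resides in the even part, undetermined. For $\lambda=\tfrac12$ I would therefore fall back on the observation that \eqref{I28} is exactly the identity $\|x+y\|^4-\|x-y\|^4=8\big(\|x\|^2\rho(x,y)+\|y\|^2\rho(y,x)\big)$ treated in \cite[Theorem 5]{Mil}, whose conclusion already forces $X$ to be an inner product space, and hence smooth. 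I thus expect the final argument to prove $\lambda\neq\tfrac12$ by the substitution and to reduce $\lambda=\tfrac12$ to that theorem.
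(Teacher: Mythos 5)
Your substitution $y \mapsto -y$ handles every $\lambda \neq \tfrac12$ correctly, and by a route genuinely different from (and shorter than) the paper's. The paper never exploits this sign symmetry for $\lambda\neq\tfrac12$; instead it differentiates the identity \eqref{I28} along $t \mapsto x + \tfrac{t}{2}y$ to show first that $\lim_{t\to 0^+}\rho_{\lambda}(x+\tfrac{t}{2}y, y) = \rho_{\lambda}(x,y)$ and then that $\rho_+(x,y) = \rho_{\lambda}(x,y)$, which forces $\rho_-=\rho_+$ whenever $\lambda > 0$; the remaining case $\lambda = 0$ is then reduced to the $\rho=\rho_{1/2}$ identity precisely by your $y \mapsto -y$ trick. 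Your computation $\rho_{\lambda}-\rho_{1-\lambda} = (2\lambda-1)(\rho_- - \rho_+)$ and the positivity argument (a vanishing sum of two nonnegative terms forces each to vanish, and $\rho_-\le\rho_+$ supplies the nonnegativity) are correct, so for $\lambda \neq \tfrac12$ your proof is complete, purely algebraic, and avoids limits entirely.

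The gap is in your treatment of $\lambda = \tfrac12$. The identity $\|x+y\|^4 - \|x-y\|^4 = 8\big(\|x\|^2\rho(x,y) + \|y\|^2\rho(y,x)\big)$ does \emph{not} force $X$ to be an inner product space: $\mathbb{R}^n$ ($n\ge 2$) with the $4$-norm satisfies it, since there $\|x\|^2\rho(x,y) = \sum_i x_i^3 y_i$ and $(a+b)^4-(a-b)^4 = 8(a^3b+ab^3)$, yet it is not an inner product space. So the conclusion you attribute to \cite[Theorem 5]{Mil} is false as stated --- Mili\v{c}i\'{c}'s identity defines the strictly larger class of quasi-inner product spaces --- and you cannot discharge the case $\lambda = \tfrac12$ by that appeal. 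You need an actual argument there, for instance the paper's limiting computation, which works uniformly for all $\lambda \in (0,1]$ and in particular for $\lambda = \tfrac12$.
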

\begin{proof}
Let $x, y\in X\setminus \{0\}$ and $\lambda \in (0, 1]$. It follows from \eqref{I28} that
\begin{align*}
8\Big({\|x\|}^2{\rho}_{\lambda}(x, y) &+ {\|y\|}^2 {\rho}_{\lambda}(y, x)\Big)
\\& = {\|x + y\|}^4 - {\|x - y\|}^4
\\& = \lim_{t\rightarrow0^{+}}\Big(\big\|(x + \frac{t}{2}y) + y\big\|^4 - \big\|(x + \frac{t}{2}y) - y\big\|^4\Big)
\\& = \lim_{t\rightarrow0^{+}}8\Big(\big\|x + \frac{t}{2}y\big\|^2{\rho}_{\lambda}(x + \frac{t}{2}y, y)
+ {\|y\|}^2 {\rho}_{\lambda}(y, x + \frac{t}{2}y)\Big)
\\& = \lim_{t\rightarrow0^{+}}8\Big(\big\|x + \frac{t}{2}y\big\|^2{\rho}_{\lambda}(x + \frac{t}{2}y, y)
+ {\|y\|}^2 \big(\frac{t}{2}{\|y\|}^2 + {\rho}_{\lambda}(y, x)\big)\Big)
\\& = 8\Big({\|x \|}^2\lim_{t\rightarrow0^{+}}{\rho}_{\lambda}(x + \frac{t}{2}y, y)
+ {\|y\|}^2 {\rho}_{\lambda}(y, x)\Big).
\end{align*}
Therefore
\begin{align}\label{I29}
\lim_{t\rightarrow0^{+}}{\rho}_{\lambda}(x + \frac{t}{2}y, y) = {\rho}_{\lambda}(x, y).
\end{align}
The equalities \eqref{I28} and \eqref{I29} imply that
\begin{align*}
\rho_+(x, y) &= \|x\|\lim_{t\rightarrow0^{+}}\frac{\|x + ty\| - \|x\|}{t}
\\& = \|x\|\lim_{t\rightarrow0^{+}}\frac{8\Big({\|x +
\frac{t}{2}y\|}^2{\rho}_{\lambda}(x + \frac{t}{2}y, \frac{t}{2}y)
+ {\|\frac{t}{2}y\|}^2 {\rho}_{\lambda}(\frac{t}{2}y, x +
\frac{t}{2}y)\Big)}{t(\|x +ty\| + \|x\|)({\|x + ty\|}^2 + {\|x\|}^2)}
\\& = \|x\|\lim_{t\rightarrow0^{+}}\frac{4{\|x +
\frac{t}{2}y\|}^2{\rho}_{\lambda}(x + \frac{t}{2}y, y)
+ \frac{t^3}{2}{\|y\|}^4 + t^2{\|y\|}^2 {\rho}_{\lambda}(y, x)}{(\|x +ty\|
+ \|x\|)({\|x + ty\|}^2 + {\|x\|}^2)}
\\& = \|x\|\frac{4{\|x\|}^2{\rho}_{\lambda}(x, y)}{(2\|x\|)(2{\|x\|}^2)} = {\rho}_{\lambda}(x, y),
\end{align*}
and hence $\rho_+(x, y) = {\rho}_{\lambda}(x, y)$.
Since ${\rho}_{\lambda}(x, y) = \lambda \rho_-(x, y) + (1 - \lambda)\rho_+(x, y)$,
we get $\rho_-(x, y) = \rho_+(x, y)$.
It follows that $X$ is smooth.

Now, let $\lambda = 0$. Then, by (\ref{I28}) we have
\begin{align}\label{I280}
{\|x + y\|}^4 - {\|x - y\|}^4 = 8\Big({\|x\|}^2{\rho}_+(x, y)
+ {\|y\|}^2 {\rho}_+(y, x)\Big) \qquad (x, y \in X).
\end{align}
If we replace $y$ by $-y$ in (\ref{I280}), then we obtain
\begin{align*}
{\|x - y\|}^4 - {\|x + y\|}^4 = 8\Big(-{\|x\|}^2{\rho}_-(x, y)
- {\|y\|}^2 {\rho}_-(y, x)\Big),
\end{align*}
or equivalently,
\begin{align}\label{I281}
{\|x + y\|}^4 - {\|x - y\|}^4 = 8\Big({\|x\|}^2{\rho}_-(x, y)
+ {\|y\|}^2 {\rho}_-(y, x)\Big) \qquad (x, y \in X).
\end{align}
Add (\ref{I280}) and (\ref{I281}) to get
\begin{align}\label{I282}
{\|x + y\|}^4 - {\|x - y\|}^4 = 8\Big({\|x\|}^2{\rho}(x, y)
+ {\|y\|}^2 {\rho}(y, x)\Big) \qquad (x, y \in X).
\end{align}
Now, by (\ref{I282}) and the same reasoning as in the first part, we conclude that $X$ is smooth.
\end{proof}
Recall that a normed space $(X, \|\cdot\|)$ is uniformly convex whenever, for all
$\varepsilon > 0$, there exists a $\xi > 0$ such that if
$\|x\| = \|y\| = 1$ and $\|x - y\|\geq \varepsilon$, then
$\left\|\frac{x + y}{2}\right\| \leq 1 - \xi$; see, for example, \cite{Dra}.
In the following theorem we state a characterization of uniformly convex spaces via ${\rho}_{\lambda}$.
\begin{theorem}
Let $(X, \|\cdot\|)$ be a normed space, and let $\lambda \in [0, 1]$. Then the following conditions are equivalent:
\begin{itemize}
\item[(i)] $X$ is uniformly convex.
\item[(ii)] For all $\varepsilon > 0$, there exists a number $\delta > 0$ such that if
$\|x\| = \|y\| = 1$ and $\|x - y\|\geq \varepsilon$, then ${\rho}_{\lambda}(x, y)
\leq \frac{1- \delta^2}{1 + \delta^2}$.
\end{itemize}
\end{theorem}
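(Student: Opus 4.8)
My plan is to prove the two implications (i)$\Rightarrow$(ii) and (ii)$\Rightarrow$(i) separately, with the whole argument resting on the sharp two-sided estimate of Theorem \ref{T23}(vii). Specialized to unit vectors $x,y$ (so $\|x\|=\|y\|=1$) it reads
\begin{align*}
1 - \|x - y\| \leq {\rho}_{\lambda}(x, y) \leq \|x + y\| - 1.
\end{align*}
The right-hand inequality powers the forward direction, while a normalization trick together with the scaling and affine identities of Theorem \ref{T23}(ii)--(iii) will power the reverse one.

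For (i)$\Rightarrow$(ii), fix $\varepsilon>0$. Uniform convexity yields $\xi>0$, which I may take in $(0,1)$, such that $\|x\|=\|y\|=1$ and $\|x-y\|\geq\varepsilon$ force $\big\|\tfrac{x+y}{2}\big\|\leq 1-\xi$, i.e. $\|x+y\|\leq 2-2\xi$. Feeding this into the upper estimate gives ${\rho}_{\lambda}(x,y)\leq \|x+y\|-1\leq 1-2\xi$. It then remains only to choose $\delta$ with $\tfrac{1-\delta^2}{1+\delta^2}=1-2\xi$; solving gives $\delta=\sqrt{\xi/(1-\xi)}>0$, and since $1-2\xi=\tfrac{1-\delta^2}{1+\delta^2}$ exactly, the conclusion ${\rho}_{\lambda}(x,y)\leq \tfrac{1-\delta^2}{1+\delta^2}$ follows. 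This direction is routine.

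For (ii)$\Rightarrow$(i) I argue by contradiction. If $X$ is not uniformly convex, there exist $\varepsilon_0>0$ and unit vectors $x_n,y_n$ with $\|x_n-y_n\|\geq\varepsilon_0$ and $s_n:=\|x_n+y_n\|\to2$. I replace the second vector by the normalized sum $b_n:=\frac{x_n+y_n}{s_n}$ (a unit vector, defined for large $n$) and show the pair $(b_n,x_n)$ defeats (ii). Writing $x_n=s_nb_n-y_n$ and using Theorem \ref{T23}(ii)--(iii) gives the exact relation $\rho_-(b_n,x_n)=s_n\|b_n\|^2-\rho_+(b_n,y_n)=s_n-\rho_+(b_n,y_n)$. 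Since $\rho_+(b_n,y_n)\leq\|b_n\|\|y_n\|=1$, this forces $\rho_-(b_n,x_n)\geq s_n-1$; combined with $\rho_-(b_n,x_n)\leq\rho_+(b_n,x_n)\leq1$ and $s_n-1\to1$, a squeeze yields $\rho_-(b_n,x_n)\to1$ and $\rho_+(b_n,x_n)\to1$, whence $\rho_\lambda(b_n,x_n)\to1$ for every $\lambda\in[0,1]$. On the other hand, from $b_n-x_n=\frac1{s_n}\big((y_n-x_n)+(2-s_n)x_n\big)$ and $s_n\leq2$ I get $\|b_n-x_n\|\geq\frac1{s_n}\big(\|x_n-y_n\|-(2-s_n)\big)\geq\varepsilon_0/4$ for large $n$. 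Thus $(b_n,x_n)$ are unit vectors at distance at least $\varepsilon_0/4$ with $\rho_\lambda(b_n,x_n)\to1$, contradicting (ii) taken with $\varepsilon=\varepsilon_0/4$, whose $\delta$ would pin $\rho_\lambda(b_n,x_n)$ below $\frac{1-\delta^2}{1+\delta^2}<1$.

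The main obstacle is precisely this reverse implication, and within it the choice of the right unit vectors to test against (ii). Testing the witnessing pair $(x_n,y_n)$ itself fails — one checks that in $\ell^\infty$ the value $\rho_\lambda(x_n,y_n)$ need not approach $1$ — so the normalization $b_n=(x_n+y_n)/s_n$ is the decisive move: with $b_n$ as the \emph{first} argument the affine identity of Theorem \ref{T23}(iii) collapses $\rho_-(b_n,x_n)$ into $s_n-\rho_+(b_n,y_n)$ and drives it to $1$, while the triangle-inequality estimate keeps $\|b_n-x_n\|$ bounded away from $0$. Marrying these two opposite-looking facts — $\rho_\lambda\to1$ yet distance bounded below — is the whole content of the equivalence, everything else reducing to the sharp inequality of Theorem \ref{T23}(vii).
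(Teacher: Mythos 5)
Your proof is correct, and both implications ultimately rest on the same key device as the paper's proof: the forward direction is identical (the upper bound $\rho_{\lambda}(x,y)\le\|x+y\|-1$ from Theorem \ref{T23}(v)/(vii) plus the substitution $\delta=\sqrt{\xi/(1-\xi)}$), and in the reverse direction your decisive choice of test pair $\bigl(\tfrac{x+y}{\|x+y\|},x\bigr)$ is exactly the pair the paper feeds into condition (ii). The differences are in packaging. First, the paper argues directly: given $\varepsilon$, it sets $\xi=\min\{\varepsilon/4,\delta^2/(1+\delta^2)\}$ and runs a two-case analysis (either $2-\|x+y\|\ge 2\xi$, which finishes immediately, or $\|x+y\|\,\|\tfrac{x+y}{\|x+y\|}-x\|\ge\varepsilon-2\xi$, which puts the test pair in the scope of (ii)); this yields an explicit modulus of convexity. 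You instead negate uniform convexity, extract sequences with $s_n=\|x_n+y_n\|\to 2$, and derive a contradiction; this is logically sound and arguably cleaner, but non-quantitative. Second, the crucial lower bound $\rho_{\lambda}(b,x)\ge\|x+y\|-1$ is obtained by the paper from the left inequality of Theorem \ref{T23}(v) applied to $(x+y,x)$ together with homogeneity, whereas you rederive it from the affine identity (iii) and the sign rule (ii) via $\rho_-(b_n,x_n)=s_n-\rho_+(b_n,y_n)\ge s_n-1$ and the monotonicity $\rho_\lambda\ge\rho_-$ --- an equivalent route. Your triangle-inequality estimate $\|b_n-x_n\|\ge\varepsilon_0/4$ for large $n$ plays the role of the paper's second case and is verified correctly. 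In short: same essential mechanism, with your version trading the explicit modulus for a shorter contradiction argument.
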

\begin{proof}
(i)$\Rightarrow$(ii) Let $X$ be uniformly convex, and let $\varepsilon > 0$.
There exists a number $\xi > 0$ such that if
$\|x\| = \|y\| = 1$ and $\|x - y\|\geq \varepsilon$, then
$\left\|\frac{x - y}{2}\right\| \leq 1 - \xi$.
Thus, by Theorem \ref{T23}(v), we obtain
\begin{align*}
{\rho}_{\lambda}(x, y) \leq \|x + y\| - 1 \leq 2(1 - \xi) - 1
= \frac{1- \frac{\xi}{1 - \xi}}{1 + \frac{\xi}{1 - \xi}}.
\end{align*}
Put $\delta = \sqrt{\frac{\xi}{1 - \xi}}$. It follows from the above inequality that
${\rho}_{\lambda}(x, y) \leq \frac{1- \delta^2}{1 + \delta^2}$.

(ii)$\Rightarrow$(i) Suppose (ii) holds.
Let $\varepsilon > 0$, and choose a number $\delta > 0$ such that if
$\|u\| = \|v\| = 1$ and $\|u - v\|\geq \frac{\varepsilon}{4}$, then
${\rho}_{\lambda}(u, v) \leq \frac{1- \delta^2}{1 + \delta^2}$.
Put $\xi = \min\{\frac{\varepsilon}{4}, \frac{\delta^2}{1 + \delta^2}\}$.
Now, let $\|x\| = \|y\| = 1$ and $\|x - y\|\geq \varepsilon$.
If $\left\|\frac{x + y}{2}\right\| = 0$, then
$\left\|\frac{x + y}{2}\right\| \leq 1 - \xi$ is evident.
Therefore, let $\left\|\frac{x + y}{2}\right\| > 0$.
So either $(2 - \|x + y\|) \geq 2\xi$ or
$\|x + y\|\left\|\frac{x + y}{\|x + y\|} - x\right\|\geq \varepsilon - 2\xi$.
(Indeed, otherwise we obtain
\begin{align*}
\|x - y\| = \left\|(2 - \|x + y\|)x - \|x + y\|\left(\frac{x + y}{\|x + y\|} - x\right)\right\|
< 2\xi + \varepsilon - 2\xi = \varepsilon,
\end{align*}
contradicting our assumption.)
If $(2 - \|x + y\|) \geq 2\xi$, then we get $\left\|\frac{x + y}{2}\right\| \leq 1 - \xi$.
In addition, if $\|x + y\|\left\|\frac{x + y}{\|x + y\|} - x\right\|\geq \varepsilon - 2\xi$,
then we reach
\begin{align*}
\left\|\frac{x + y}{\|x + y\|} - x\right\|\geq \frac{\varepsilon - 2\xi}{\|x + y\|}
\geq \frac{\varepsilon - 2\xi}{2}\geq \frac{\varepsilon}{4}.
\end{align*}
Since $\|x\| = \left\|\frac{x + y}{\|x + y\|}\right\| = 1$ and
$\left\|\frac{x + y}{\|x + y\|} - x\right\|\geq \frac{\varepsilon}{4}$,
our assumption yields
\begin{align}\label{I210}
{\rho}_{\lambda}\left(\frac{x + y}{\|x + y\|}, x\right)
\leq \frac{1- \delta^2}{1 + \delta^2}.
\end{align}
By Theorem \ref{T23}(v) and (\ref{I210}), we conclude that
\begin{align*}
\left\|\frac{x + y}{2}\right\| &= \frac{1}{2}\Big( 1 + \big(\|x + y\| - \|(x + y) - x\|\big)\Big)
\\& \leq\frac{1}{2}\left( 1 + \frac{1}{\|x + y\|}{\rho}_{\lambda}(x + y, x)\right)
\\& \leq\frac{1}{2}\left( 1 + \frac{1- \delta^2}{1 + \delta^2}\right)
= 1 - \frac{\delta^2}{1 + \delta^2} \leq 1 - \xi.
\end{align*}
Thus $\left\|\frac{x + y}{2}\right\|\leq 1 - \xi$ and the
proof is completed.
\end{proof}
We finish this section by applying our definition of the functional ${\rho}_{\lambda}$
to give a new characterization of inner product spaces.
\begin{theorem}\label{T26}
Let $(X, \|\cdot\|)$ be a normed space, and let $\lambda \in [0, 1]$. Then the following conditions are equivalent:
\begin{itemize}
\item[(i)] ${\rho}_{\lambda}(x, y) = {\rho}_{\lambda}(y, x)$ for all $x, y \in X$.
\item[(ii)] The norm in $X$ comes from an inner product.
\end{itemize}
\end{theorem}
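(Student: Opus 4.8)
The plan is to treat the two implications separately, with (ii)$\Rightarrow$(i) essentially immediate and (i)$\Rightarrow$(ii) carrying all the content.

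For (ii)$\Rightarrow$(i): if the norm is induced by an inner product $\langle\cdot,\cdot\rangle$, then $X$ is smooth and $\rho_{-}(x,y)=\rho_{+}(x,y)=\langle x,y\rangle$ for all $x,y$. Hence ${\rho}_{\lambda}(x,y)=\lambda\langle x,y\rangle+(1-\lambda)\langle x,y\rangle=\langle x,y\rangle$, and the symmetry of the inner product gives ${\rho}_{\lambda}(x,y)={\rho}_{\lambda}(y,x)$ at once.

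For (i)$\Rightarrow$(ii): assume ${\rho}_{\lambda}(x,y)={\rho}_{\lambda}(y,x)$ for all $x,y$. The first step I would take is to promote this to the symmetry of the averaged functional $\rho$. Applying the hypothesis to the pair $(-x,y)$ and invoking Theorem \ref{T23}(ii), which gives ${\rho}_{\lambda}(-x,y)=-{\rho}_{1-\lambda}(x,y)$ and ${\rho}_{\lambda}(y,-x)=-{\rho}_{1-\lambda}(y,x)$, I would obtain that ${\rho}_{1-\lambda}$ is symmetric as well, that is, ${\rho}_{1-\lambda}(x,y)={\rho}_{1-\lambda}(y,x)$. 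Writing both symmetry identities out in terms of $\rho_{-}$ and $\rho_{+}$ and adding them, the coefficients of $\rho_{-}(x,y)$ and $\rho_{+}(x,y)$ each collapse to $1$, so $\rho_{-}(x,y)+\rho_{+}(x,y)=\rho_{-}(y,x)+\rho_{+}(y,x)$; equivalently $\rho(x,y)=\rho(y,x)$ for all $x,y\in X$. This reduction is valid for every $\lambda\in[0,1]$, including the case $\lambda=\tfrac12$ where ${\rho}_{\lambda}=\rho$ already.

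The last step is to conclude that symmetry of $\rho$ forces the norm to arise from an inner product, and this is the step I expect to be the main obstacle. I would invoke the classical characterization from the norm-derivative theory developed in \cite{A.S.T}, asserting precisely that $X$ is an inner product space whenever $\rho(x,y)=\rho(y,x)$ for all $x,y$, and then set $\langle x,y\rangle:={\rho}_{\lambda}(x,y)$. If a self-contained argument were demanded instead, the route would be to show that the real-homogeneous, symmetric functional $\rho$ — which satisfies $\rho(x,x)=\|x\|^{2}$, $|\rho(x,y)|\le\|x\|\,\|y\|$, and $\rho(x,\alpha x+y)=\alpha\|x\|^{2}+\rho(x,y)$ by Theorem \ref{T23} — is additive in its second variable; additivity together with symmetry and homogeneity would make $\rho$ a symmetric positive-definite bilinear form with $\rho(x,x)=\|x\|^{2}$, so that $\langle x,y\rangle:=\rho(x,y)$ is an inner product inducing the norm. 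Establishing additivity from symmetry alone is the genuinely delicate point, and it is exactly here that the cited characterization does the heavy lifting.
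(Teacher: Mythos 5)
Your proof is correct, and its first half coincides with the paper's: both directions of the easy implication, the derivation of the symmetry of ${\rho}_{1-\lambda}$ from that of ${\rho}_{\lambda}$ via Theorem \ref{T23}(ii), and the observation that averaging the two identities yields $\rho(x,y)=\rho(y,x)$ — note that the paper's candidate inner product $\langle x,y\rangle:=\tfrac{1}{2}\big({\rho}_{\lambda}(x,y)+{\rho}_{1-\lambda}(x,y)\big)$ is exactly $\rho(x,y)$, so you and the authors reduce to the same symmetric functional. Where you diverge is the final step: you outsource it to the classical characterization (due to Mili\v{c}i\'{c}, and available in \cite{A.S.T}) that symmetry of $\rho$ forces the norm to come from an inner product, whereas the paper proves this from scratch. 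Concretely, the authors fix a two-dimensional subspace $P$, use full real homogeneity of the averaged functional in the second variable (this is precisely why the average with ${\rho}_{1-\lambda}$ is taken, since ${\rho}_{\lambda}$ alone is only positively homogeneous) together with the quasi-additivity ${\rho}_{\lambda}(x,tx+y)=t\|x\|^{2}+{\rho}_{\lambda}(x,y)$ to verify additivity in the second variable by a two-case argument (second argument a multiple of $x$, or expressed as $tx+ry$ in the 2D span), and then invoke \cite[Theorem 1.4.5]{A.S.T} to pass from all two-dimensional subspaces to $X$. So the delicate additivity step you correctly flag as the crux is exactly the content of the paper's Cases 1 and 2; your version trades that explicit verification for a citation to a stronger black-box theorem, which is legitimate but less self-contained, while the paper's version still ultimately rests on a cited reduction (2D subspaces to the whole space). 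Either route closes the argument; if you want your write-up to match the paper's level of self-containment, you should carry out the additivity verification rather than cite the symmetry characterization.
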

\begin{proof}
Obviously, (ii)$\Rightarrow$(i).

Suppose (i) holds. This condition implies that
${\rho}_{1 - \lambda}(x, y) = {\rho}_{1 - \lambda}(y, x)$
for all $x, y \in X$. Indeed Theorem \ref{T23}(ii) implies
\begin{align*}
{\rho}_{1 - \lambda}(x, y) = -{\rho}_{\lambda}(-x, y)
= -{\rho}_{\lambda}(y, -x) = {\rho}_{1 - \lambda}(y, x).
\end{align*}
Now, let $P$ be any two dimensional subspace of $X$.
Define a mapping $\langle \cdot, \cdot\rangle:X\times X\rightarrow\mathbb{R}$ by
\begin{align*}
\langle x, y\rangle := \frac{{\rho}_{\lambda}(x, y) + {\rho}_{1 - \lambda}(x, y)}{2},
\qquad (x, y\in X).
\end{align*}
We will show that $\langle \cdot, \cdot\rangle$ is an inner product in $P$.
It is easy to see that the mapping $\langle \cdot, \cdot\rangle$ is non-negative,
symmetric, and homogeneous.
Therefore, it is enough to show the additivity respect to the second variable.
Take $x, y, z\in P$. We consider two cases:

$\mathbf{Case \,1.}$ $x$ and $y$ are linearly dependent.
Thus $y = tx$ for some $t\in\mathbb{R}$ and so
\begin{align*}
\langle x, y + z\rangle& = \langle x, tx + z\rangle
\\&= \frac{{\rho}_{\lambda}(x, tx + z) + {\rho}_{1 - \lambda}(x, tx + z)}{2}
\\ & = \frac{2t{\|x\|}^2 + {\rho}_{\lambda}(x, z) + {\rho}_{1 - \lambda}(x, z)}{2}
\\& = \langle x, tx\rangle + \langle x, z\rangle
= \langle x, y\rangle + \langle x, z\rangle.
\end{align*}
$\mathbf{Case \,2.}$ $x$ and $y$ are linearly independent.
Hence $z = tx + ry$ for some $t, r\in\mathbb{R}$.
We have
\begin{align*}
\langle x, y + z\rangle& = \langle x, tx + (1 + r)y\rangle
\\& = \frac{{\rho}_{\lambda}\big(x, tx + (1 + r)y\big)
+ {\rho}_{1 - \lambda}\big(x, tx + (1 + r)y\big)}{2}
\\ & = \frac{2t{\|x\|}^2 + {\rho}_{\lambda}\big(x, (1 + r)y\big)
+ {\rho}_{1 - \lambda}\big(x, (1 + r)y\big)}{2}
\\ & = \langle x, tx\rangle + \langle x, (1 + r)y\rangle
\\ & = \langle x, tx\rangle + (1 + r)\langle x, y\rangle
\\& = \langle x, y\rangle + \big(\langle x, tx\rangle
+ \langle x, ry\rangle\big) \qquad (\mbox{by case 1})
\\&= \langle x, y \rangle + \langle x, tx + ry\rangle
= \langle x, y \rangle + \langle x, z\rangle.
\end{align*}
Thus $\langle \cdot, \cdot\rangle$ is an inner product in $P$.
So, by \cite[Theorem 1.4.5]{A.S.T}, the norm in $X$ comes from an inner product.
\end{proof}
\section{Linear mappings preserving ${\rho}_{\lambda}$-orthogonality}
A mapping $T:H\rightarrow K$ between two inner product spaces $H$ and $K$ is said to
be orthogonality preserving if $x\perp y$ ensures $Tx\perp Ty$ for every $x,y\in H$.
It is well known that an orthogonality preserving linear mapping between two inner
product spaces is necessarily a similarity, that is, there exists a positive constant $\gamma$
such that $\|Tx\| = \gamma\|x\|$ for all $x\in H$; see \cite{Ch.1, Z.M.F, Z.C.H.K}.

Now, let $X$ and $Y$ be normed spaces, and let
$\diamondsuit \in \{B, I, \rho_-, \rho_+, \rho, \rho_*, {\rho}_{\lambda}\}$.
Let us consider the linear mappings $T:X\rightarrow Y$, which preserve the
$\diamondsuit$-orthogonality in the following sense:
\begin{align*}
x\perp_{\diamondsuit} y\Rightarrow Tx\perp_{\diamondsuit} Ty \qquad(x,y\in X).
\end{align*}
\begin{remark}
Such mappings can be very irregular, far from being continuous
or linear; see \cite{Ch.1}. Therefore we restrict ourselves to linear mappings only.
\end{remark}
It is proved by Koldobsky \cite{K} (for real spaces) and Blanco and Turn\v{s}ek
\cite{B.T} (for real and complex ones) that a linear mapping $T\colon X\to Y$ preserving
$B$-orthogonality has to be a similarity.
Martini and Wu \cite{M.W} proved
the same result for mappings preserving $I$-orthogonality.
In \cite{C.W.2,C.W.3, W}, for $\diamondsuit \in \{\rho_-, \rho_+, \rho\}$,
Chmieli\'{n}ski and W\'{o}jcik proved that a linear mapping, which preserves
$\diamondsuit$-orthogonality, is a similarity.

Recently, the authors of the paper \cite{M.Z.D} studied $\rho_*$-orthogonality
preserving mappings between real normed spaces. In particular, they showed that
every linear mapping that preserves $\rho_*$-orthogonality is necessarily a similarity
(The same result is obtained in \cite{C.L} by using a different approach for real and complex spaces).

In this section, we show that every ${\rho}_{\lambda}$-orthogonality preserving
linear mapping is necessarily a similarity as well.
Throughout, we denote by $\mu^n$ the Lebesgue measure on $\mathbb{R}^n$.
When $n = 1$ we simply write $\mu$.
\begin{lemma}\cite[Theorem 1.18]{Ph}\label{L41}
Every norm on $\mathbb{R}^n$ is Gateaux differentiable
$\mu^n$--a.e. on $\mathbb{R}^n$.
\end{lemma}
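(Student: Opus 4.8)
The plan is to exploit the fact that a norm $N(x):=\|x\|$ is a finite, convex, globally Lipschitz function on $\mathbb{R}^n$, and to deduce almost-everywhere Gateaux differentiability from convexity directly rather than invoking the full strength of Rademacher's theorem. The starting observation is that convexity guarantees, at every point $x$ and in every direction $v$, the existence of the one-sided directional derivative
\begin{align*}
p_x(v):=\lim_{t\rightarrow0^{+}}\frac{N(x+tv)-N(x)}{t},
\end{align*}
and that $v\mapsto p_x(v)$ is sublinear (positively homogeneous and subadditive). Since $N$ is Gateaux differentiable at $x$ precisely when the two-sided limit $f_x(v)=\lim_{t\rightarrow0}(N(x+tv)-N(x))/t$ exists for every $v$, and this two-sided limit exists exactly when $p_x(v)=-p_x(-v)$ for all $v$, the task reduces to showing that, off a $\mu^n$-null set, the sublinear function $p_x$ is in fact linear.

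First I would establish that the partial derivatives of $N$ exist almost everywhere. Fix a coordinate direction $e_i$. For each line $\ell$ parallel to $e_i$, the restriction $N|_{\ell}$ is a convex function of one real variable, hence differentiable at all but countably many points of $\ell$; in particular the set of points of $\ell$ at which $\partial_i N$ fails to exist is $\mu$-null. By Fubini's theorem, the set $E_i\subseteq\mathbb{R}^n$ of points where $\partial_i N$ does not exist is $\mu^n$-null. Consequently $E:=\bigcup_{i=1}^{n}E_i$ is $\mu^n$-null, and at every $x\notin E$ all $n$ partial derivatives $\partial_1 N(x),\dots,\partial_n N(x)$ exist.

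The decisive step, and the one I expect to be the main obstacle, is to upgrade the mere existence of all partial derivatives at $x\notin E$ to full Gateaux differentiability; this is where convexity must be used in an essential way, since for a general function the existence of all partials does not force differentiability in any direction. Here I would represent the sublinear function $p_x$ as the support function of its subdifferential $\partial p_x(0)$, a nonempty compact convex subset of $\mathbb{R}^n$, so that $p_x(v)=\max\{\langle \phi, v\rangle:\phi\in\partial p_x(0)\}$. A short computation identifies $p_x(e_i)$ with the right partial derivative and $-p_x(-e_i)$ with the left partial derivative of $N$ in the direction $e_i$, so that the existence of $\partial_i N(x)$ is equivalent to $p_x(e_i)=-p_x(-e_i)$, that is, to $\max_{\phi}\phi_i=\min_{\phi}\phi_i$ over $\phi\in\partial p_x(0)$. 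Thus the $i$-th coordinate of $\phi$ is the same for every $\phi\in\partial p_x(0)$. As this holds for each $i=1,\dots,n$, all elements of $\partial p_x(0)$ share every coordinate, whence $\partial p_x(0)$ is a singleton and $p_x$ is linear. Therefore $N$ is Gateaux differentiable at each $x\notin E$, and since $E$ is $\mu^n$-null the proof is complete.
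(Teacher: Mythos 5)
Your proof is correct. The paper itself offers no argument for this lemma---it is quoted verbatim from \cite[Theorem 1.18]{Ph}---so there is no internal proof to compare against; what you have written is essentially the standard argument found in Phelps and in Rockafellar: reduce Gateaux differentiability of the convex function $N$ at $x$ to linearity of the sublinear directional-derivative functional $p_x$; obtain the existence of all $n$ partial derivatives off a null set from the at-most-countability of the non-differentiability set of a one-variable convex function together with Fubini; and then use the representation of $p_x$ as the support function of the nonempty compact convex subdifferential $\partial N(x)$ to see that agreement of the two one-sided partials in each coordinate direction forces every coordinate to be constant on $\partial N(x)$, so that $\partial N(x)$ is a singleton and $p_x$ is linear. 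You correctly isolate the genuinely nontrivial step (existence of partials does not imply differentiability for general functions) and handle it properly via the subdifferential. The only point you pass over silently is the measurability of the exceptional set $E_i$, which Fubini requires: this is harmless, since by monotonicity of the difference quotients the one-sided partial derivatives are pointwise limits along $t=1/k$ of continuous functions, hence Borel, so $E_i$ is Borel. As a side remark, your argument actually yields Fr\'{e}chet differentiability $\mu^n$--a.e. (linearity of $p_x$ together with the Lipschitz bound and compactness of the unit sphere makes the convergence of the difference quotients uniform in the direction), though Gateaux differentiability is all the paper needs.
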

The following lemma plays a crucial role in
the proof of the next theorem.
\begin{lemma}\cite[Lemma 2.4]{B.T}\label{L42}
Let $\|\cdot\|$ be any norm on $\mathbb{R}^2$, and let $D\subseteq \mathbb{R}^2$
be a set of all nonsmooth points. Then there exists a path
$\gamma : [0, 2] \rightarrow \mathbb{R}^2$ of the form:
\begin{align*}
\gamma(t):= \Bigg \{\begin{array}{ll}
(1, t\xi), & t\in[0, 1],
\\\\
\big(1, (2 - t)\xi + (t - 1)\big), & t\in[1, 2],
\end{array}
\end{align*}
for some $\xi \in \mathbb{R}$, so that $\mu\{t: \gamma(t) \in D\} = 0$.
\end{lemma}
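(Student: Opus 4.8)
The plan is to exploit two structural facts: nonsmoothness is a property of \emph{directions} rather than of individual points, and the unit circle of a two‑dimensional norm, being a convex curve, has only a negligible set of corners. First I would record the homogeneity consequence of the scaling rule for $\rho_{\pm}$: for every $\alpha \neq 0$ one has $\rho_+(\alpha x, y) - \rho_-(\alpha x, y) = |\alpha|\big(\rho_+(x,y) - \rho_-(x,y)\big)$, since for $\alpha<0$ the roles of $\rho_+$ and $\rho_-$ swap. Because $x$ is a smooth point exactly when $\rho_+(x,y)=\rho_-(x,y)$ for all $y$, this shows that $x$ is smooth if and only if $\alpha x$ is smooth. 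Hence the set $D$ of nonsmooth points is a cone, that is, a union of rays through the origin, and is determined by $\Theta := D \cap S$, where $S$ denotes the Euclidean unit circle.

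Next I would show that $\Theta$ is $\mu$‑null on $S$. The unit ball of $\|\cdot\|$ is a compact convex body, so its boundary is a convex Jordan curve; traversing it, the direction of the outward support line is a monotone function of arclength with total variation $2\pi$. A boundary point is smooth precisely when its support line is unique, and the nonunique directions are exactly the jump discontinuities of this monotone function, of which there are at most countably many. Thus $\Theta$ is countable, so certainly $\mu_S(\Theta)=0$. (Alternatively, Lemma \ref{L41} gives $\mu^2(D)=0$; writing $D\cap\{1\le|x|_2\le 2\}$ in polar coordinates yields area $\tfrac{3}{2}\,\mu_S(\Theta)$, so again $\mu_S(\Theta)=0$.)

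With $D$ understood, I would analyze the path. Both branches of $\gamma$ take values on the vertical line $\ell := \{(1,s): s\in\mathbb{R}\}$. The radial map $\phi:\mathbb{R}\to S$, $\phi(s)=(1,s)/\sqrt{1+s^2}$, is a $C^1$ diffeomorphism onto the open right semicircle, and $(1,s)\in D$ exactly when $\phi(s)\in\Theta$; hence $L := \{s:(1,s)\in D\}=\phi^{-1}(\Theta)$ is $\mu$‑null in $\mathbb{R}$, being the image of a null set under the diffeomorphism $\phi^{-1}$. On $[0,1]$ the second coordinate of $\gamma$ is the affine map $t\mapsto t\xi$ of slope $\xi$, and on $[1,2]$ it is $t\mapsto (2-t)\xi+(t-1)$ of slope $1-\xi$. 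Choosing any $\xi\notin\{0,1\}$ makes both branches injective affine maps, so $\{t:\gamma(t)\in D\}$ is the union of the two affine preimages of $L$; since the affine preimage of a null set is null, I conclude $\mu\{t:\gamma(t)\in D\}=0$.

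The step I expect to be the main obstacle is the passage from ``$D$ is small'' to ``$D$ meets the line $\ell$ in a null set.'' This does not follow from $\mu^2(D)=0$ alone, as a planar null set can intersect a fixed line in a set of full one‑dimensional measure; it genuinely relies on the cone structure of $D$ together with the transversality of $\ell$ (the line avoids the origin, so each ray of $D$ meets it in at most one point). The sole purpose of the freedom in $\xi$ is to keep the two affine parametrizations non‑constant: if $\xi=0$ or $\xi=1$ one branch would be frozen at a single point of $\ell$, and should that point lie in $D$ the parameter set would contain a whole interval, defeating the conclusion.
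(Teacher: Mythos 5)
The paper does not actually prove this lemma: it is imported verbatim from Blanco and Turn\v{s}ek \cite{B.T}, so there is no in-paper argument to compare yours against line by line. Judged on its own terms, your proof is correct, and it isolates the structural point that makes the statement true. The scaling rule for $\rho_{\pm}$ (with $\rho_{+}$ and $\rho_{-}$ swapping when $\alpha<0$) shows that smoothness depends only on the direction, so $D$ is a cone; its trace on the unit circle corresponds, under radial projection, to the corners of the unit ball of $\|\cdot\|$, of which a planar convex body has at most countably many (your alternative route via Lemma \ref{L41} and polar coordinates gives nullity of the trace just as well). Hence $L=\{s:(1,s)\in D\}$ is countable, and for any $\xi\notin\{0,1\}$ both affine branches of $\gamma$ are injective, so $\{t:\gamma(t)\in D\}$ is countable and therefore $\mu$-null. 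Your closing diagnosis is also the right one: $\mu^{2}(D)=0$ alone says nothing about the intersection of $D$ with a fixed line, so the cone structure cannot be dispensed with; this is precisely the content hidden behind the casual citation. Two minor observations. First, you prove more than is asserted: the lemma only claims existence of one admissible $\xi$, while your argument shows every $\xi\in\mathbb{R}\setminus\{0,1\}$ works; this is harmless and in fact convenient. Second, in the application inside Theorem \ref{T41} the relevant set $\Delta$ collects the nonsmooth points of \emph{two} norms, $\|\cdot\|$ and $\|\cdot\|_{T}$, and $D$ is its image under a linear isomorphism; your argument covers this case without change, since a union of two cones each meeting the circle in a countable set is again a cone of the same kind, and linear isomorphisms preserve that structure.
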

We are now in the position to establish the main result of this section.
\begin{theorem}\label{T41}
Let $X$ and $ Y$ be normed spaces, and let $T\,:X\longrightarrow Y$ be a nonzero linear bounded mapping.
Then the following conditions are equivalent:
\begin{itemize}
\item[(i)] $T$ preserves ${\rho}_{\lambda}$-orthogonality.
\item[(ii)] $\|Tx\| = \|T\|\,\|x\|$ for all $x\in X$.
\item[(iii)] ${\rho}_{\lambda}(Tx, Ty) = \|T\|^2\,{\rho}_{\lambda}(x, y)$ for all $x, y\in X$.
\end{itemize}
\end{theorem}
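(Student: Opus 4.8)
The plan is to prove the cycle (ii)$\Rightarrow$(iii)$\Rightarrow$(i)$\Rightarrow$(ii), with essentially all the work concentrated in the last implication. For (ii)$\Rightarrow$(iii) I would feed the hypothesis $\|Tz\|=\|T\|\,\|z\|$, valid for every $z\in X$, directly into the limits defining the norm derivatives: since $\|Tx+tTy\|=\|T(x+ty)\|=\|T\|\,\|x+ty\|$ and $\|Tx\|=\|T\|\,\|x\|$, the constant $\|T\|$ factors out twice and yields $\rho_{\pm}(Tx,Ty)=\|T\|^2\rho_{\pm}(x,y)$; taking the convex combination with weights $\lambda$ and $1-\lambda$ gives (iii). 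The implication (iii)$\Rightarrow$(i) is immediate, since $\rho_{\lambda}(x,y)=0$ forces $\rho_{\lambda}(Tx,Ty)=\|T\|^2\rho_{\lambda}(x,y)=0$, that is, $Tx\perp_{\rho_{\lambda}}Ty$.

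The heart of the matter is (i)$\Rightarrow$(ii). The first reduction is that it suffices to control $T$ on an arbitrary two-dimensional subspace $W\subseteq X$, since any two vectors of $X$ lie in such a subspace. Before doing so I would use (i) to rule out the rank-deficient cases: a rank-one restriction $T|_W$ would send $\rho_{\lambda}$-orthogonal pairs to collinear vectors whose $\rho_{\lambda}$ almost never vanishes, contradicting preservation, so $T|_W$ is injective on every $W$ on which $T$ does not vanish, and a short argument then forces $\ker T=\{0\}$. Thus I may identify both $W$ and $T(W)$ with $\mathbb{R}^2$. The local engine is the observation that at a smooth point $x$ one has $\rho_-(x,\cdot)=\rho_+(x,\cdot)$, so $\rho_{\lambda}(x,\cdot)$ is independent of $\lambda$ and $x\perp_{\rho_{\lambda}}y\Leftrightarrow x\perp_{B}y$ there. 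Combining this with $\perp_{\rho_{\lambda}}\subseteq\perp_{B}$ (Theorem \ref{th.001}) and the characterization of Birkhoff--James orthogonality in Lemma \ref{L21}, the hypothesis (i) upgrades, at every smooth point $x$, to the implication $x\perp_{B}y\Rightarrow Tx\perp_{B}Ty$.

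It remains to convert this ``at smooth points'' statement into a global similarity. Here I would invoke Lemma \ref{L41}, so that the nonsmooth points of $W$ and of $T(W)$ form $\mu^2$--null sets, together with the path Lemma \ref{L42}, tuning its free parameter $\xi$ by a Fubini argument so that both $\gamma$ and its image $T\circ\gamma$ meet their respective nonsmooth sets only for a $\mu$--null set of parameters $t$. For the remaining $t$, both $\gamma(t)$ and $T\gamma(t)$ are smooth, with unique support functionals $f_{\gamma(t)}$ and $g_{T\gamma(t)}$; the implication $x\perp_B y\Rightarrow Tx\perp_B Ty$ at $\gamma(t)$ reads $T(\ker f_{\gamma(t)})\subseteq\ker g_{T\gamma(t)}$, which forces $g_{T\gamma(t)}\circ T=c(t)\,f_{\gamma(t)}$; evaluating at $\gamma(t)$ gives $c(t)=\|T\gamma(t)\|/\|\gamma(t)\|$. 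Differentiating the two norms along $\gamma$ (using Gateaux differentiability at smooth points) then yields $\frac{d}{dt}\|T\gamma(t)\|=(g_{T\gamma(t)}\circ T)(\gamma'(t))=c(t)\frac{d}{dt}\|\gamma(t)\|$ for a.e. $t$, so the logarithmic derivative of $t\mapsto\|T\gamma(t)\|/\|\gamma(t)\|$ vanishes a.e.; absolute continuity makes this ratio constant along $\gamma$. Sweeping all directions of $W$ (by applying the lemma to suitable bases) gives $\|Tx\|=c_W\|x\|$ on $W$, and since any two vectors of $X$ share a two-dimensional subspace, $c_W$ is a global constant $c$; taking the supremum of $\|Tx\|/\|x\|$ identifies $c=\|T\|$, which is (ii). Alternatively, once $x\perp_B y\Rightarrow Tx\perp_B Ty$ holds for all $x$, one may simply quote the Koldobsky and Blanco--Turn\v{s}ek theorem that a Birkhoff--James orthogonality preserving linear map is a similarity.

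I expect the main obstacle to be precisely the set of nonsmooth points. Off this set the equivalence $\perp_{\rho_{\lambda}}=\perp_{B}$ breaks down, the support functionals are no longer unique, and the ratio $\|T\gamma(t)\|/\|\gamma(t)\|$ need not be differentiable; the entire difficulty is to confine all such bad behaviour to a null set and then to upgrade an almost-everywhere vanishing derivative into honest constancy. This is exactly what Lemmas \ref{L41} and \ref{L42} and the absolute-continuity argument are for, and it is the step demanding the most care, especially the simultaneous control of the nonsmooth sets of $X$ and of $Y$ along the single path $\gamma$.
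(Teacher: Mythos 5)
Your proposal is correct and follows essentially the same route as the paper: the nontrivial implication (i)$\Rightarrow$(ii) is handled, exactly as in the paper, by restricting to a two-dimensional subspace, using a.e.\ Gateaux differentiability (Lemma \ref{L41}) together with the path of Lemma \ref{L42} to obtain the proportionality of support functionals $g_{T\gamma(t)}\circ T=c(t)\,f_{\gamma(t)}$ (the paper's $G_u=\varphi(u)F_u$ for the pullback norm $\|\cdot\|_T$) off a null set of parameters, and then an absolute-continuity argument to make the ratio $\|T\gamma(t)\|/\|\gamma(t)\|$ constant along the path. The only caveat is your closing aside: since the implication $x\perp_B y\Rightarrow Tx\perp_B Ty$ is derived only at smooth points $x$, one cannot simply invoke the Koldobsky/Blanco--Turn\v{s}ek theorem on $B$-orthogonality preserving maps, so the path/differentiation argument (which both you and the paper carry out in full) is genuinely needed.
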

\begin{proof}
The implications (ii)$\Rightarrow$(iii) and (iii)$\Rightarrow$(i) are clear
and it remains to prove (i)$\Rightarrow$(ii).
Now we adopt some techniques used by Blanco and Turn\v{s}ek
\cite[Theorem 3.1]{B.T}. Suppose that (i) holds. Clearly we can assume $T \neq 0$.
Let us first show that $T$ is injective.
Suppose on the contrary that $Tx = 0$ for some $x\in X \setminus \{0\}$.
Let $y$ be a element in $X$ which is independent of $x$.
Then we can choose a number $n \in \mathbb{N}$ such that $\frac{\|y\|}{n\|x + \frac{1}{n}y\|} < 1$.
Put $z = x + \frac{1}{n}y$. Therefore Theorem \ref{T23}(vi) implies that
\begin{align}\label{I41}
0 < 1 - \frac{\|y\|}{n\|z\|} = 1 - \frac{\|z\|\,\|y\|}{n{\|z\|}^2}
\leq 1 - \frac{{\rho}_{\lambda}(z, y)}{n{\|z\|}^2}.
\end{align}
On the other hand,
${\rho}_{\lambda}(z, -\frac{{\rho}_{\lambda}(z, y)}{{\|z\|}^2}z + y)
= -\frac{{\rho}_{\lambda}(z, y)}{{\|z\|}^2}{\|z\|}^2 + {\rho}_{\lambda}(z, y) = 0$.
Since $T$ preserves ${\rho}_{\lambda}$-orthogonality, it follows that
\begin{align}\label{I42}
\frac{1}{n}\left(1 -\frac{{\rho}_{\lambda}(z, y)}{{n\|z\|}^2}\right){\|Ty\|}^2
= {\rho}_{\lambda}(Tz, -\frac{{\rho}_{\lambda}(z, y)}{{\|z\|}^2}Tz + Ty) = 0.
\end{align}
Relations (\ref{I41}) and (\ref{I42}) yield $Ty = 0$.
Hence $T = 0$, a contradiction.
We show next that
\begin{align*}
\|x\| = \|y\| \,\Rightarrow \,\|Tx\| = \|Ty\| \qquad (x, y \in X),
\end{align*}
which gives (ii). If $x$ and $y$ are linearly dependent, then $x = ty$ for some $t\in \mathbb{R}$ with $|t| = 1$.
Thus $\|Tx\| = \|tTy\| = \|Ty\|$.
Now let us suppose that $x$ and $y$ are linearly independent.
Let $M$ be the linear subspace spanned by $x$ and $y$.
For $u \in M$,
define ${\|u\|}_T : = \|Tu\|$. Since $T$ is injective, ${\|\cdot\|}_T$ is a norm on $M$.
Let $\Delta$ be the set of all those points $u \in M$ at which at least one of the norms,
$\|\cdot\|$ or ${\|\cdot\|}_T$, is not Gateaux differentiable.
For $u \in M\setminus \Delta$, let $F_u$ and $G_u$ denote the support functionals
at $u$ of $\|\cdot\|$ and ${\|\cdot\|}_T$ on $M$, respectively.
Let $v \in \ker F_u$. Since $(M, \|\cdot\|)$ is smooth at $u$, we obtain
${\rho}_{\lambda}(u, v) = 0$,
and hence ${\rho}_{\lambda}(Tu, Tv) = 0$.
Moreover, since $(M, {\|\cdot\|}_T)$ is smooth at $u$, we have
\begin{align}\label{I44}
{\rho}_{\lambda}(Tu, Tv) = \lambda {\|u\|}_TG_u(v) + (1 - \lambda){\|u\|}_TG_u(v) = \|Tu\|G_u(v),
\end{align}
whence $G_u(v) = 0$.
So, we have $\ker F_u \subseteq \ker G_u$ for all $u \in M\setminus \Delta$, or equivalently there exists a function
$\varphi : M\setminus \Delta \rightarrow \mathbb{R}$ such that $G_u = \varphi(u) F_u$ for all $u \in M\setminus \Delta$.
By \eqref{I44} we get $\|Tu\| = \varphi(u)\|u\|$ for all $u\in M\setminus \Delta$.
So, we conclude that $g_u = \varphi(u) f_u$, where $f_u$ and $g_u$ are the Gateaux
differentials at $u$ of $\|\cdot\|$ and ${\|\cdot\|}_T$, respectively.
Define $L : \mathbb{R}^2 \rightarrow M$ by $L(r, t) := rx + t(y - x)$.
Clearly, $L$ is a linear isomorphism.
Set $D = L^{-1}(M)$. Then $D$ is the set of those points $(r, t) \in \mathbb{R}^2$
at which at least one of the functions $(r, t)\mapsto \|L((r, t)\|$ or
$(r, t)\mapsto {\|L((r, t)\|}_T$ is not Gateaux differentiable.
Both these functions are norms on $\mathbb{R}^4$. Hence, by Lemma \ref{L41}, $\mu^4(D) = 0$.
Let $\gamma : [0, 2] \rightarrow \mathbb{R}^2$ be the path obtained in Lemma \ref{L42}.
Then $\Phi : [0, 2] \rightarrow M$ defined by
\begin{align*}
\Phi(t) := \frac{\|x\|}{\|L(\gamma(t))\|} L(\gamma(t)) \qquad (t\in [0, 2]),
\end{align*}
is a path from $x$ to $y$ such that $\|\Phi(t)\| = \|x\|$ and
$\mu\{t:\,\Phi(t)\in \Delta\} = \mu\{t:\,\gamma(t)\in D\} = 0$.
Note that $t \mapsto \|L(\gamma(t))\|$ and $t \mapsto {\|L(\gamma(t))\|}_T$
are Lipschitz functions and, therefore, are absolutely continuous. Indeed,
if $t_1, t_2\in[0, 1]$, then
\begin{align*}
\Big|\|L(\gamma(t_1))\| - \|L(\gamma(t_2))\|\Big| \leq |\xi||t_1 - t_2|\|y - x\|.
\end{align*}
In addition, if $t_1, t_2\in[1, 2]$, then
$\Big|\|L(\gamma(t_1))\| - \|L(\gamma(t_2))\|\Big|
\leq |1 - \xi||t_1 - t_2|\|y - x\|$.
Finally, if $t_1\in[0, 1]$ and $t_2\in[1, 2]$, then
\begin{align*}
\Big|\|L(\gamma(t_1))\| - \|L(\gamma(t_2))\|\Big|
\leq (1 + |\xi|)|t_1 - t_2|\,\|y - x\|.
\end{align*}
So $t \mapsto \|L(\gamma(t))\|$ satisfies Lipschitz conditions.
Similarly, $t \mapsto {\|L(\gamma(t))\|}_T$ satisfies
Lipschitz conditions. It follows that
${\|\Phi(t)\|}_T = \frac{\|x\|{\|L(\gamma(t))\|}_T}{\|L(\gamma(t))\|}$
is absolutely continuous and that
$\mu\big\{t:\, \Phi'(t)\,\,\mbox{does not exist}\big\}
= \mu\big\{t:\,\, {\|L(\gamma(t))\|}'\,\mbox{does not exist}\big\} = 0$.
Since $t \mapsto \|\Phi(t)\| = \|x\|$ is a constant function, we obtain
${{\|\Phi(t)\|}'}_T = 0$ $\mu$--a.e. on $[0, 2]$. Thus $t \mapsto {\|\Phi(t)\|}_T$ is a constant
function, and we arrive at $\|Tx\| = \|Ty\|$.
\end{proof}
Finally, taking $X = Y$ and $T = id$, one obtains, from Theorem \ref{T41},
the following result.
\begin{corollary}
Let $X$ be a normed space endowed with two norms ${\|\cdot\|}_1$ and ${\|\cdot\|}_2$,
which generate respective functionals ${\rho}_{\lambda, 1}$ and ${\rho}_{\lambda, 2}$.
Then the following conditions are equivalent:
\begin{itemize}
\item[(i)] There exist constants $0 < m \leq M$ such that
\begin{align*}
m|{\rho}_{\lambda, 1}(x, y)| \leq |{\rho}_{\lambda, 2}(x, y)|
\leq M |{\rho}_{\lambda, 1}(x, y)| \qquad (x, y\in X).
\end{align*}
\item[(ii)] The spaces $(X, {\|\cdot\|}_1)$ and $(X, {\|\cdot\|}_2)$
are isometrically isomorphic.
\end{itemize}
\end{corollary}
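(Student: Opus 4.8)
The plan is to apply Theorem \ref{T41} to the identity operator $T = \mathrm{id}$, viewed as a linear map from $(X, \|\cdot\|_1)$ to $(X, \|\cdot\|_2)$, so that the three conditions of that theorem translate directly into statements about the pair $\rho_{\lambda, 1}, \rho_{\lambda, 2}$. The bridge between hypothesis (i), which only controls $\rho_\lambda$, and Theorem \ref{T41}, which presupposes boundedness of $T$, is the diagonal identity $\rho_{\lambda, i}(x, x) = \|x\|_i^2$; this follows from Theorem \ref{T23}(iii) by taking $y = 0$ and $t = 1$, together with the homogeneity in Theorem \ref{T23}(i), which gives $\rho_{\lambda, i}(x, 0) = 0$.

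For the direction (ii)$\Rightarrow$(i), which I would treat as the easy one, I assume the norms are proportional, say $\|\cdot\|_2 = c\|\cdot\|_1$ for some $c > 0$ (this is the content of the identity being a similarity, equivalently of the two spaces being isometrically isomorphic through a scalar multiple of $\mathrm{id}$). Then $\mathrm{id}\colon (X, \|\cdot\|_1)\to(X, \|\cdot\|_2)$ satisfies $\|\mathrm{id}(x)\|_2 = c\|x\|_1$, so $\|\mathrm{id}\| = c$ and condition (ii) of Theorem \ref{T41} holds. The implication (ii)$\Rightarrow$(iii) of that theorem then yields $\rho_{\lambda, 2}(x, y) = c^2\,\rho_{\lambda, 1}(x, y)$ for all $x, y \in X$, and passing to absolute values gives (i) with $m = M = c^2$.

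For (i)$\Rightarrow$(ii), I would first feed $y = x$ into the sandwich inequality; using $\rho_{\lambda, i}(x, x) = \|x\|_i^2$ it collapses to $m\|x\|_1^2 \leq \|x\|_2^2 \leq M\|x\|_1^2$, that is, $\sqrt{m}\,\|x\|_1 \leq \|x\|_2 \leq \sqrt{M}\,\|x\|_1$. Hence the two norms are equivalent, so $\mathrm{id}$ is a nonzero bounded linear map which, being bounded below, is injective. Next, the full sandwich inequality also forces $\rho_{\lambda, 1}(x, y) = 0 \Rightarrow \rho_{\lambda, 2}(x, y) = 0$, which is exactly the assertion that $\mathrm{id}$ preserves $\rho_\lambda$-orthogonality. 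Thus condition (i) of Theorem \ref{T41} is satisfied, and its implication (i)$\Rightarrow$(ii) gives $\|x\|_2 = \|\mathrm{id}\|\,\|x\|_1$ for all $x$; the norms are therefore proportional, and $x \mapsto \|\mathrm{id}\|^{-1} x$ is an isometric isomorphism of $(X, \|\cdot\|_1)$ onto $(X, \|\cdot\|_2)$, which is (ii).

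The only genuine obstacle is the (i)$\Rightarrow$(ii) direction, and it is resolved entirely by the diagonal identity $\rho_{\lambda, i}(x, x) = \|x\|_i^2$: without it the $\rho_\lambda$-sandwich carries no a priori information about the norms themselves, and Theorem \ref{T41} could not be invoked. Once the norm equivalence has been extracted, the argument reduces to a straightforward application of Theorem \ref{T41}.
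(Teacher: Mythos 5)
Your proof is correct and is exactly the paper's (tersely stated) argument: the paper's entire proof consists of the remark that one takes $X = Y$ and $T = \mathrm{id}$ in Theorem \ref{T41}, and your diagonal identity ${\rho}_{\lambda,i}(x,x) = \|x\|_i^2$ correctly supplies the bridge the paper leaves implicit, turning the sandwich inequality into the boundedness and $\rho_{\lambda}$-orthogonality-preservation hypotheses of that theorem. Your reading of condition (ii) as proportionality of the two norms (isometric isomorphism via a scalar multiple of the identity) is also the reading the paper's argument requires, since for a general isometric isomorphism $S \neq c\,\mathrm{id}$ the implication (ii)$\Rightarrow$(i) would fail.
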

\textbf{Acknowledgement.}
This research is supported by a grant from the
Iran National Science Foundation (INSF- No. 95013683).
\bibliographystyle{amsplain}

\begin{thebibliography}{99}

\bibitem{A.S.T} C. Alsina, J. Sikorska and M. S. Tom\'{a}s,
\textit{Norm Derivatives and Characterizations of Inner Product Spaces},
World Scientific Publishing Co. Pte. Ltd., Hackensack, NJ, 2010.

\bibitem{B} G. Birkhoff,
\textit{Orthogonality in linear metric spaces},
Duke Math. J. \textbf{1} (1935), 169--172.

\bibitem{B.T} A. Blanco and A. Turn\v{s}ek,
\textit{On maps that preserves orthogonality in normed spaces},
Proc. Roy. Soc. Edinburgh Sect. A \textbf{136} (2006), no. 4, 709--716.

\bibitem{C.L} Ch. Chen and F. Lu,
\textit{Linear maps preserving orthogonality},
Ann. Funct. Anal. \textbf{6} (2015), no. 4, 70--76.

\bibitem{Ch.1} J. Chmieli\'{n}ski,
\textit{Linear mappings approximately preserving orthogonality},
J. Math. Anal. Appl. \textbf{304} (2005), no. 1, 158--169.


\bibitem{C.W.2} J. Chmieli\'{n}ski and P. W\'{o}jcik,
\textit{On a $\rho$-orthogonality},
Aequationes Math. \textbf{80} (2010), 45--55.


\bibitem{C.W.3} J. Chmieli\'{n}ski and P. W\'{o}jcik,
\textit{$\rho$-orthogonality and its preservation-revisited},
Recent Developments in Functional Equation and Inequalities,
Banach Center Publ., Volume 99, Institute of Mathematic,
Polish Academy of Sciences, Warszawa 2013, 17--30.

\bibitem{Dra} S. S. Dragomir,
\textit{Semi-Inner Products and Applications}.
Nova Science Publishers Inc, Hauppauge (2004).

\bibitem{J} R.C. James,
\textit{Orthogonality in normed linear spaces},
Duke Math. J. \textbf{12} (1945), 291--302.

\bibitem{K} A. Koldobsky,
\textit{Operators preserving orthogonality are isometries},
Proc. Roy. Soc. Edinburgh Sect. A \textbf{123} (1993), no. 5, 835--837.

\bibitem{Mal} L. Maligranda,
\textit{Simple norm inequalities},
Amer. Math. Monthly \textbf{113} (2006), no. 3, 256--260.

\bibitem{M.W} H. Martini and S. Wu,
\textit{On maps preserving isosceles orthogonality in normed linear spaces},
Note Mat. \textbf{29} (2009), no. 1, 55–-59.

\bibitem{Mil} P. M. Mili\v{c}i\'{c},
\textit{Sur la G-orthogonalit\'{e} dans les esp\'{e}ace\'{e}s norm\'{e}s},
Math. Vesnik. \textbf{39} (1987), 325--334.

\bibitem{M.Z.D} M. S. Moslehian, A. Zamani and M. Dehghani,
\textit{Characterizations of smooth spaces by $\rho_*$-orhogonality},
Houston J. Math. \textbf{43} (2017), no. 4, 1187--1208.

\bibitem{Ph} R. R. Phelps,
\textit{Convex Functions, Monotone Operators and Differentiability},
Lecture Notes in Mathematics 1364 (Springer, 1993).

\bibitem{W} P. W\'{o}jcik,
\textit{Linear mappings preserving $\rho$-orthogonality},
J. Math. Anal. Appl. \textbf{386} (2012), 171--176.


\bibitem{Z.M.F} A. Zamani, M. S. Moslehian and M. Frank,
\textit{Angle preserving mappings},
Z. Anal. Anwend. \textbf{34} (2015), 485--500.

\bibitem{Z.C.H.K} Y. Zhang, Y. Chen, D. Hadwin and L. Kong,
\textit{AOP mappings and the distance to the scalar multiples of isometries},
J. Math. Anal. Appl. \textbf{431} (2015), no. 2, 1275--1284.

\end{thebibliography}

\end{document}